\theoremstyle{definition}
\newtheorem{definition}{Definition}
\newcommand{\definitionlab}[1]{\label{definition:#1}}
\newcommand{\definitionref}[1]{Definition~\ref{definition:#1}}
\newtheorem{proposition}{Proposition}
\newcommand{\propositionlab}[1]{\label{proposition:#1}}
\newcommand{\propositionref}[1]{Proposition~\ref{proposition:#1}} 
\newtheorem{theorem}{Theorem}
\newcommand{\theoremlab}[1]{\label{theorem:#1}}
\newcommand{\theoremref}[1]{Theorem~\ref{theorem:#1}}
\newtheorem{corollary}{Corollary}
\newcommand{\corollarylab}[1]{\label{corollary:#1}}
\newcommand{\corollaryref}[1]{Corollary~\ref{corollary:#1}}
\newtheorem{lemma}{Lemma}
\newcommand{\lemmalab}[1]{\label{lemma:#1}}
\newcommand{\lemmaref}[1]{Lemma~\ref{lemma:#1}}
\newtheorem{remark}{Remark}
\newcommand{\remarklab}[1]{\label{remark:#1}}
\newcommand{\remarkref}[1]{Remark~\ref{remark:#1}}
\newtheorem{example}{Example}
\newcommand{\examplelab}[1]{\label{example:#1}}
\newcommand{\exampleref}[1]{Example~\ref{example:#1}}
\newcommand{\LL}[1]{{L}_{#1}}
\newcommand{\hh}[2]{\widetilde{h}(#1,#2)}
\newcommand{\F}[1]{F(#1)}
\newcommand{\FF}[2]{\widetilde{F}(#1,#2)}
\newcommand{\FFO}[2]{f(#1)+\hh{#1}{#2}}
\newcommand{\FFxgrad}[2]{\nabla_\bsx\FF{#1}{#2}}
\newcommand{\FFxgradO}[2]{\nabla_\bsx(f(#1)+\hh{#1}{#2})}
\renewcommand{\email}[1]{\texttt{(E-mail:~#1)}}
\title{Convergence Rate Analysis of Continuous- and Discrete-Time Smoothing Gradient Algorithms}
\author[M.~Toyoda]{Mitsuru Toyoda${}^1$}
\author[A.~Nishioka]{Akatsuki Nishioka${}^2$}
\author[M.~Tanaka]{Mirai Tanaka${}^3$}
\thanks{
	${}^1$Department of Mechanical Systems Engineering, 
	Tokyo Metropolitan University, 
	6-6 Asahigaoka, 
	Hino-shi, 
	191-0065 Tokyo, Japan \email{toyoda@tmu.ac.jp}} %
\thanks{
	${}^2$Department of Mathematical Informatics, 
	Graduate School of Information Science and Technology, 
	The University of Tokyo %
	\email{akatsuki\_nishioka@mist.i.u-tokyo.ac.jp}}%
\thanks{
	${}^3$Department of Statistical Inference and Mathematics, 
	The Institute of Statistical Mathematics, 
	10-3 Midori-cho, 
	Tachikawa-shi, 
	190-8562 Tokyo, Japan, and 
	Continuous Optimization Team, 
	RIKEN Center for Advanced Intelligence Project, 
	Nihonbashi 1-chome Mitsui Building, 15th floor, 1-4-1 Nihonbashi, 
	Chuo-ku, 
	103-0027 Tokyo, Japan \email{mirai@ism.ac.jp}}
\begin{document}
\begin{abstract}
	This paper addresses the gradient flow--the continuous-time representation of the gradient method--with the smooth approximation of a non-differentiable objective function
	and presents convergence analysis framework.
	Similar to the gradient method, the gradient flow is inapplicable to the non-differentiable function minimization;
	therefore, this paper addresses the smoothing gradient method,
	which exploits a decreasing smoothing parameter sequence in the smooth approximation.
	The convergence analysis is presented using conventional Lyapunov-function-based techniques,
	and a Lyapunov function applicable to both strongly convex and non-strongly convex objective functions is provided
	by taking into consideration the effect of the smooth approximation.
	Based on the equivalence of the stepsize in the smoothing gradient method and
	the discretization step in the forward Euler scheme for
	the numerical integration of the smoothing gradient flow,
	the sample values of the exact solution of the smoothing gradient flow are compared with
	the state variable of the smoothing gradient method, and
	the equivalence of the convergence rates is shown.
\end{abstract}

\maketitle

\section{Introduction}
The smoothing method has been exploited to replace a non-differentiable function with an alternative smooth function and make the gradient-based algorithms applicable.
Besides, the proximal algorithms for a class of the non-differentiable objective functions having friendly structures to the proximal operation (see, e.g., \cite{Pa14})
have recently attracted much attention; however, the applicable problems are limited, and
the development and analysis of the smoothing method are still required.

The smoothing algorithms are broadly separated into two classes: the smoothing parameter is fixed or varying.
The fixed smoothing parameter (see, e.g., Section 10.8 of \cite{Be17a}) focuses on obtaining an $\varepsilon$-optimal solution
and does not have the theoretical guarantee of the convergence to the optimal value.
On the other hand, the convergence to the optimal value has been established
using decreasing smoothing parameter in existing reports \cite{Ch12b}.
The detailed convergence analysis taking into consideration the design of the smoothing parameter sequence is necessary \cite{Bi20,Wu23}
because the decreasing rate of the smoothing parameter sequence has a huge effect on the convergence performance of the algorithms.

Besides the aforementioned algorithms are given by discrete-time difference equations,
the gradient flow, which is a representation of the gradient method as a continuous-time ordinary differential equation (ODE),
has been explored in the research fields of the differential equations and related numerical analysis \cite{Br89}.
For an ODE representation \cite{Su16} of the Nesterov's accelerated gradient method \cite{Ne14},
a smoothing approximation for the ODE and related convergence analysis are proposed \cite{Qu22}.
As points out in a research \cite{Us22},
the transform of the timescale and state variables leads to an arbitrary convergence rate;
therefore, unified convergence rate evaluation in the continuous-time systems is not clear.
In the practical numerical computation,
the gradient flow, given by an ODE, generally does not have an analytic solution and
requires to be numerically integrated; therefore,
the comparisons in terms of some practical timescale equivalent to the actual computation time, such as the evaluation time of the gradient of the objective function,
is preferred in the evaluation of the algorithms.
The resulting discrete-time difference equations depend on many kinds of the discretization schemes \cite{Do80,Pr92} and related stepsize design,
and the unified comparisons on the convergence rates in terms of the resulting discrete step is still not clear.

Based on the aforementioned background, this paper focuses on the smoothing gradient flow (continuous-time system)
and an equivalent gradient method (discrete-time system) obtained using the forward Euler scheme,
and the convergence rates of the systems are examined. The main contribution of this paper is summarized as follows:
\begin{enumerate}
	\item Similar Lyapunov functions for the discrete-time and continuous-time systems are introduced and
	      exploited to derive convergence rates. The proposed Lyapunov functions are
	      extension of the conventional Lyapunov functions \cite{Wi18}
	      and take into consideration the smoothing approximation and the strong convexity.
	\item For the stepsize design in the discrete-time systems,
	      the relationship to the continuous timeline in the continuous-time system is explored,
	      and it is pointed out that the discrete-time and continuous-time systems have the difference expression of the convergence rate.
	\item A novel stepsize and smoothing parameter sequence design method is proposed based on
	      the aforementioned continuous-time system analysis, and associated convergence guarantee is provided.
\end{enumerate}

\section{Notations}
\begin{itemize}
	\item For a sequence $a_k\,(k \geq 0)$, a sum $\sum_{k=0}^{N}a_k = 0$ is defined for $N < 0$.
	      Furthermore, a product $\prod_{k=0}^{N}a_k = 1$ is defined for $N < 0$.
	      There formal definitions are introduced so that
	      an equations $\sum_{k=0}^{N}a_k - \sum_{k=0}^{N-1}a_k = a_N$ and $\prod_{k=0}^{N}a_k = a_N\prod_{k=0}^{N-1}a_k$
	      hold for $N \leq 0$.
	\item For functions $f(t)$ and $g(t)$ on $[t_0,+\infty)$,
	      if there exist $C>0$ and $T\in[t_0,+\infty)$ such that $f(t) \leq Cg(t)$ ($f(t) \geq Cg(t)$) for arbitrary $t \geq T$,
	      a notation $f(t) = \mathcal{O}(g(t))$ ($f(t) = \Omega(g(t))$) is used.
	      If $f(t) = \mathcal{O}(g(t))$ and $f(t) = \Omega(g(t))$,
	      a notation $f(t) = \Theta(g(t))$ is used.
	\item Similarly, for sequences $a_k$ and $b_k$ on $k=0,1,\dots$,
	      if there exist $C>0$ and $K \geq 0$ such that
	      $a_k \leq Cb_k$ ($a_k \geq Cb_k$) for $k \geq K$,
	      a notation $a_k = \mathcal{O}(b_k)$ ($a_k = \Omega(b_k)$) is used.
	      If $a_k = \mathcal{O}(b_k)$ and $a_k = \Omega(b_k)$,
	      a notation $a_k = \Theta(b_k)$ is used.
\end{itemize}
\section{Problem Formulation}
A smooth approximation of a possibly non-differentiable function is defined as follows (Definition 10.43 of \cite{Be17a}):
\begin{definition}\definitionlab{1}
	A convex function $h:\R^{n_\bsx}\to\R$ is considered.
	A convex differentiable function $\hh{\cdot}{\mu}:\R^{n_\bsx}\to\R$ satisfying the conditions below
	for an arbitrary $\mu>0$ is called a $(1/\mu)$-smooth approximation of $h$ with parameters $(\alpha,\beta)$:
	\begin{enumerate}
		\item For an arbitrary $\bsx\in\R^{n_\bsx}$,
		      an inequality $\hh{\bsx}{\mu} \leq h(\bsx) \leq \hh{\bsx}{\mu} + \beta \mu$ holds $(\beta > 0)$.
		\item $\hh{\cdot}{\mu}$ is $(\alpha/\mu)$-smooth $(\alpha > 0)$.
	\end{enumerate}
\end{definition}
Subsequently, the Lipschitz continuous smooth approximation, meaning
a smooth approximation that is Lipschitz continuous with respect to $\mu$,
is introduced.
\begin{definition}\definitionlab{3}
	A $(1/\mu)$-smooth approximation of $h$ with parameters $(\alpha,\beta)$
	is considered. For an arbitrary $\bsx\in\R^{n_\bsx}$ and $\mu>0$, if an inequality
	\begin{equation}
		-\beta \leq \nabla_{\mu}\hh{\bsx}{\mu} \leq 0
		\elab{Definition2}
	\end{equation}
	holds, $h$ is called
	a $(1/\mu)$-Lipschitz continuous smooth approximation of $h$ with parameters $(\alpha,\beta)$.
\end{definition}
\begin{remark}
	$\hh{\bsx}{\mu} \leq h(\bsx) \leq \hh{\bsx}{\mu} + \beta \mu$ of \definitionref{1}
	and $-\beta \leq \nabla_{\mu}\hh{\bsx}{\mu} \leq 0$ of \definitionref{3} are
	somewhat strong compared with those of existing reports \cite{Qu22,Bi20},
	given by $-\beta \leq \nabla_{\mu}\hh{\bsx}{\mu} \leq \beta$.
	Definitions \ref{definition:1} and \ref{definition:3} of this paper halve the two smooth approximation error terms in \cite{Qu22,Bi20}.
\end{remark}
\begin{remark}
	The integration of the both side of the inequality $-\beta \leq \nabla_{\mu}\hh{\bsx}{\mu} \leq 0$ from $\mu = 0+$ to $\mu = \nu$
	results in an inequality $-\beta\nu \leq \hh{\bsx}{\nu} - h(\bsx) \leq 0$.
	On replacing $\nu$ with $\mu$ and rearranging,
	an inequality $\hh{\bsx}{\mu} \leq h(\bsx) \leq \hh{\bsx}{\mu} + \beta \mu$ is obtained and
	is consistent with the condition for fixed smoothing parameters (\definitionref{1}).
\end{remark}
The condition of the Lipschitz continuous approximation holds in common smooth approximations,
such as square-root-based and
Huber's $\ell_2$-norm approximations
and the log-sum-exp approximation, as presented in the following examples.
\begin{example}\examplelab{l2NormSmoothingFunc1}(Example 10.44 of \cite{Be17a})
	A $(1/\mu)$-smooth approximation of the $\ell_2$-norm with parameters $(1,1)$
	and its gradient with respect to $\bsx$ are given as follows:
	\begin{equation*}
		\widetilde{h}_{\ell_2}(\bsx,\mu) = \sqrt{\|\bsx\|_2^2 + \mu^2} - \mu,
		\,\,
		\nabla_{\bsx}\widetilde{h}_{\ell_2}(\bsx,\mu)
		= \frac{\bsx}{\sqrt{\|\bsx\|_2^2 + \mu^2}}.
	\end{equation*}
	The partial derivative of $\widetilde{h}_{\ell_2}(\bsx,\mu)$ with respect to $\mu > 0$ is
	\begin{equation*}
		\nabla_\mu\widetilde{h}_{\ell_2}(\bsx,\mu) = \frac{\mu}{\sqrt{\|\bsx\|_2^2 + \mu^2}} -1
		\in[-1, 0),
	\end{equation*}
	which means that $-1 \leq \nabla_\mu\widetilde{h}_{\ell_2}(\bsx,\mu) < 0$.
\end{example}
\begin{example}\examplelab{l2NormSmoothingFunc2}(Example 10.53 of \cite{Be17a})
	A $(1/\mu)$-smooth approximation of the $\ell_2$-norm with parameters $(1,1/2)$,
	named the Huber function, and its gradient with respect to $\bsx$ are given as follows:
	\begin{equation*}
		\widetilde{h}_{\ell_2}(\bsx,\mu) =
		\begin{cases}{}
			\dfrac{1}{2\mu}\|\bsx\|_2^2 & (\|\bsx\|_2 \leq \mu), \\
			\|\bsx\|_2 - \dfrac{\mu}{2} & (\|\bsx\|_2 > \mu),
		\end{cases}\,\,
		\nabla_\bsx\widetilde{h}_{\ell_2}(\bsx,\mu) =
		\begin{cases}{}
			\dfrac{\bsx}{\mu}        & (\|\bsx\|_2 \leq \mu), \\
			\dfrac{\bsx}{\|\bsx\|_2} & (\|\bsx\|_2 > \mu).
		\end{cases}
	\end{equation*}
	Furthermore, the partial derivative with respect to $\mu$ is calculated as follows:
	\begin{equation*}
		\nabla_\mu\widetilde{h}_{\ell_2}(\bsx,\mu) =
		\begin{cases}{} \displaystyle\nabla_\mu\left(\frac{1}{2\mu}\|\bsx\|_2^2 \right) = - \frac{1}{2\mu^2}\|\bsx\|_2^2 \in \left[-\frac{1}{2},0\right] & (\|\bsx\|_2 \leq \mu), \\
             \displaystyle \nabla_\mu\left(\|\bsx\|_2 - \frac{1}{2\mu} \right) = -\frac{1}{2}                                                    & (\|\bsx\|_2 > \mu).
		\end{cases}
	\end{equation*}
	Consequently, an inequality
	$-1/2 \leq \nabla_\mu\widetilde{h}_{\ell_2}(\bsx,\mu) \leq 0$ is obtained.
\end{example}

\begin{example}\examplelab{LogSumExp}(Example 10.45 of \cite{Be17a})
	An operation $\max(\bsx)$, which calculates the maximum element of a vector $\bsx\in\R^{n_\bsx}$,
	is taken into consideration.
	A $(1/\mu)$-smooth approximation of $\max(\bsx)$ with parameters $(1,\log n_\bsx)$
	is given by
	\begin{equation*}
		\widetilde{h}_{\max}(\bsx,\mu) = \mu \log\left(\sum_{i=1}^{n_\bsx}\ee^{x_i/\mu} \right) -
		\mu \log n_\bsx.
	\end{equation*} The partial derivative with respect to $\mu$ is
	\begin{equation*}
		\nabla_\mu\widetilde{h}_{\max}(\bsx,\mu)
		= \log\left(\sum_{i=1}^{n_\bsx}\ee^{x_i/\mu} \right) -
		\left.
		\dsum_{i=1}^{n_\bsx}\ee^{x_i/\mu}\dfrac{x_i}{\mu}
		\middle/
		\dsum_{i=1}^{n_\bsx}\ee^{x_i/\mu}
		\right.
		- \log n_\bsx.
	\end{equation*}
	On focusing on the inside of the sum above,
	variable transform
	$z_i = \ee^{x_i/\mu} > 0$, meaning $\log z_i = x_i / \mu$, is introduced.
	The equation above is rearranged as
	\begin{equation*}
		\nabla_\mu\widetilde{h}_{\max}(\bsx,\mu)
		= \log\left(\sum_{i=1}^{n_\bsx}z_i \right)
		- \left.
		\dsum_{i=1}^{n_\bsx} z_i \log z_i
		\middle/
		\dsum_{i=1}^{n_\bsx}z_i
		\right.
		- \log n_\bsx.
	\end{equation*}
	On recalling a function $-\log$ is convex
	and using a vector $\bsz = [z_1,\dots,z_{n_\bsx}]^\top\in\R_{>0}^{n_\bsx}$,
	Jensen's inequality indicates
	\begin{equation*}
		- \left.
		\dsum_{i=1}^{n_\bsx} z_i \log z_i
		\middle/
		\dsum_{i=1}^{n_\bsx}z_i
		\right.
		\geq - \log\left(
		\dsum_{i=1}^{n_\bsx} z_i^2
		\middle/
		\dsum_{i=1}^{n_\bsx}z_i
		\right)
		= - \log\frac{\|\bsz\|_2^2}{\|\bsz\|_1}.
	\end{equation*}
	The inequality above and a basic inequality of the $\ell_2$- and $\ell_1$-norms
	given by $\|\bsz\|_2 \leq \|\bsz\|_1$ result in the following inequality:
	\begin{equation*}
		\nabla_\mu\widetilde{h}_{\max}(\bsx)
		\geq \log \|\bsz\|_1
		- \log\frac{\|\bsz\|_2^2}{\|\bsz\|_1} - \log n_\bsx
		= - \log\frac{\|\bsz\|_2^2}{\|\bsz\|_1^2} - \log n_\bsx
		\geq - \log n_\bsx.
	\end{equation*}
\end{example}

The following theorem is used to
calculate the Lipschitz-smoothness parameter of
a weighted sum of two smooth approximations with affine scaling.
\begin{theorem}\theoremlab{LipschitzSum}
	A $(1/\mu)$-Lipschitz continuous smooth approximation $\widetilde{h}_1(\bsx_1,\mu)$
	of $h_1(\bsx_1)$ with parameters $(\alpha_1,\beta_1)$
	and a $(1/\mu)$-Lipschitz continuous smooth approximation $\widetilde{h}_2(\bsx_2,\mu)$
	of $h_2(\bsx_2)$ with parameters $(\alpha_2,\beta_2)$ are considered.
	For weight coefficients $w_1 \geq 0$ and $w_2 \geq 0$,
	and matrices and vectors $\bsA_1$, $\bsA_2$, $\bsb_1$, and $\bsb_2$ with
	appropriate sizes,
	a function $\widetilde{h}_\mathrm{sum}(\bsxi,\mu) = w_1 \widetilde{h}_1(\bsA_1\bsxi+\bsb_1,\mu) + w_2 \widetilde{h}_2(\bsA_2\bsxi+\bsb_2,\mu)$
	is
	a $(w_1\alpha_1\|\bsA_1\|_2^2 + w_2\alpha_2\|\bsA_2\|_2^2,
		w_1\beta_1 + w_2\beta_2)$-Lipschitz continuous smooth approximation function
	of $h_\text{sum}(\bsxi) = w_1 h_1(\bsA_1\bsxi+\bsb_1) + w_2 h_2(\bsA_2\bsxi+\bsb_2)$.
\end{theorem}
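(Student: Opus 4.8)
The plan is to verify, directly from the hypotheses on $\widetilde{h}_1$ and $\widetilde{h}_2$, the two conditions of \definitionref{1} together with the Lipschitz-continuity condition of \definitionref{3}, propagating each property through the affine substitutions $\bsxi\mapsto\bsA_i\bsxi+\bsb_i$ and the nonnegative weighted sum. Convexity and differentiability of $\widetilde{h}_\mathrm{sum}(\cdot,\mu)$ are immediate: each $\widetilde{h}_i(\bsA_i\bsxi+\bsb_i,\mu)$ is convex as a composition of a convex function with an affine map, the nonnegative weights $w_1,w_2$ preserve convexity under addition, and differentiability follows from the chain rule.

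For the first condition of \definitionref{1}, I would take the sandwich inequality for $\widetilde{h}_i$ at the point $\bsA_i\bsxi+\bsb_i$, multiply by $w_i\ge 0$, and add the two resulting inequalities; this gives $\widetilde{h}_\mathrm{sum}(\bsxi,\mu)\le h_\mathrm{sum}(\bsxi)\le \widetilde{h}_\mathrm{sum}(\bsxi,\mu)+(w_1\beta_1+w_2\beta_2)\mu$, yielding the claimed second parameter $w_1\beta_1+w_2\beta_2$. The Lipschitz-continuity condition of \definitionref{3} is handled identically: because $\mu$ enters only the second argument, the chain rule gives $\nabla_\mu\widetilde{h}_\mathrm{sum}=w_1\nabla_\mu\widetilde{h}_1+w_2\nabla_\mu\widetilde{h}_2$ with no $\bsA_i$ appearing, so multiplying $-\beta_i\le\nabla_\mu\widetilde{h}_i\le 0$ by $w_i$ and summing produces $-(w_1\beta_1+w_2\beta_2)\le\nabla_\mu\widetilde{h}_\mathrm{sum}(\bsxi,\mu)\le 0$, which is exactly the condition of \definitionref{3} with the same $\beta$.

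The main work, and the only place where $\|\bsA_i\|_2^2$ enters, is the second condition of \definitionref{1}, namely the $(\alpha/\mu)$-smoothness. Here the chain rule gives $\nabla_\bsx\widetilde{h}_\mathrm{sum}(\bsxi,\mu)=w_1\bsA_1^\top\nabla_\bsx\widetilde{h}_1(\bsA_1\bsxi+\bsb_1,\mu)+w_2\bsA_2^\top\nabla_\bsx\widetilde{h}_2(\bsA_2\bsxi+\bsb_2,\mu)$, so for two points $\bsxi,\bseta$ I would split the difference of gradients by the triangle inequality into two terms. In each term I pull out $\|\bsA_i^\top\|_2=\|\bsA_i\|_2$ by submultiplicativity of the spectral norm, apply the $(\alpha_i/\mu)$-smoothness of $\widetilde{h}_i$ to bound the inner gradient difference by $(\alpha_i/\mu)\|\bsA_i(\bsxi-\bseta)\|_2$, and use $\|\bsA_i(\bsxi-\bseta)\|_2\le\|\bsA_i\|_2\|\bsxi-\bseta\|_2$. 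The two factors of $\|\bsA_i\|_2$ combine to $\|\bsA_i\|_2^2$, and collecting constants gives the Lipschitz constant $(w_1\alpha_1\|\bsA_1\|_2^2+w_2\alpha_2\|\bsA_2\|_2^2)/\mu$, that is, the claimed first parameter.

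The only point demanding care is the bookkeeping in this smoothness estimate: one must use the spectral norm consistently so that $\|\bsA_i^\top\|_2=\|\bsA_i\|_2$ holds and the two norm factors genuinely multiply to $\|\bsA_i\|_2^2$, and one must keep $\mu$ fixed throughout the Lipschitz-in-$\bsx$ bound. Beyond this, every step reduces to linearity of the nonnegative weighted sum and monotonicity under multiplication by $w_i\ge 0$, so I do not anticipate any genuine obstruction.
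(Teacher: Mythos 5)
Your proof is correct and takes essentially the same route as the paper: the genuinely new content, the bound $-(w_1\beta_1+w_2\beta_2)\leq\nabla_\mu\widetilde{h}_\mathrm{sum}(\bsxi,\mu)\leq 0$, is obtained exactly as in the paper by weighting the corresponding bounds for $\widetilde{h}_1$ and $\widetilde{h}_2$ by $w_1,w_2\geq 0$ and summing. The only difference is that the paper delegates the smooth-approximation part (the sandwich inequality with parameter $w_1\beta_1+w_2\beta_2$ and the $(\alpha/\mu)$-smoothness with the $\|\bsA_i\|_2^2$ factors) to Theorem~10.46 of Beck, whereas you verify it directly via the chain rule and submultiplicativity of the spectral norm; your direct verification is sound.
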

\begin{proof}
	Based on Theorem~10.46 of \cite{Be17a},
	$\widetilde{h}_\text{sum}$ is
	a $(1/\mu_k)$-smooth approximation function of $h_\text{sum}$
	with parameters $(w_1\alpha_1\|\bsA_1\|_2^2 + w_2\alpha_2\|\bsA_2\|_2^2, w_1\beta_1 + w_2\beta_2)$.
	The inequality \eref{Definition2}, required by the Lipschitz continuous smoothability,
	is obtained on combining the inequalities \eref{Definition2} for $h_1$ and $h_2$
	results in $-\beta_1 \leq \nabla_\mu \widetilde{h}_1(\bsA_1\bsxi+\bsb_1,\mu) \leq 0$
	and $-\beta_2 \leq \nabla_\mu \widetilde{h}_2(\bsA_2\bsxi+\bsb_2,\mu) \leq 0$.
	The combination of the two inequalities results in
	$-(w_1\beta_1 + w_2\beta_2) \leq \nabla_\mu
		[w_1\widetilde{h}_1(\bsA_1\bsxi+\bsb_1,\mu) + w_2\widetilde{h}_2(\bsA_2\bsxi+\bsb_2,\mu)]
		\leq 0$, and the claim of the theorem is obtained.
\end{proof}

This paper addresses the following optimization problem:
\begin{equation}
	\minimize_{\bsx\in\R^{n_\bsx}}\,\, F(\bsx) := f(\bsx) + h(\bsx).
	\elab{Prob}
\end{equation}
In the problem above, the following assumptions are placed (see Assumption 10.56 of \cite{Be17a}):
\begin{itemize}
	\item $f:\R^{n_\bsx}\to\R$ is $\sigma$-strongly convex $(\sigma \geq 0)$ and $L$-smooth $(L \geq 0)$.
	\item $h:\R^{n_\bsx}\to\R$ is convex, and
	      a $(1/\mu)$-smooth approximation $\widetilde{h}(\cdot,\mu)$ of $h$ with parameters $(\alpha,\beta)$ is given.
	\item An optimal solution $\bsx^*$ exists.
\end{itemize}
A simplified notation $\FF{\bsx}{\mu} := \FFO{\bsx}{\mu}$ is used throughout this paper.
The smoothing parameter sequence $\mu(t)$ for smoothing $h$ is assumed to be non-increasing and converge to the zero
(i.e., $\mu(t)>0\,(t\in[t_0,+\infty),\,\dot{\mu}(t)\leq 0,\,\mu(t)\searrow 0)$) hereafter.
$\FF{\cdot}{\mu(t)} = f(\cdot) + \hh{\cdot}{\mu(t)}$ is $\sigma$-strongly convex and $L(t)$-smooth $(L(t):=L + \alpha/\mu(t))$, meaning differentiable,
and the gradient flow of the smooth approximation $\FF{\cdot}{\mu(t)} = f(\cdot) + \hh{\cdot}{\mu(t)}$ is given by
\begin{equation}
	\begin{split}
		\dot{\bsx}(t)
		 & = -\FFxgrad{\bsx(t)}{\mu(t)}                  \\
		 & = -\FFxgradO{\bsx(t)}{\mu(t)} \, (t\geq t_0),
	\end{split}
	\elab{SmoothingGradientFlow}
\end{equation}
and the ODE above is called the smoothing gradient flow of $F = f + h$,
which is referred to as ``the continuous-time system'' in this paper.
In addition, the discretization with a stepsize $s_k\,(k=0,1,\dots,)$ satisfying $t_{k+1} = t_{k} + s_k$,
which is the well-known forward Euler scheme, results in the following discrete-time difference equation:
\begin{equation}
	\begin{split}
		\bsxI{k+1} & = \bsxI{k} - s_k\FFxgrad{\bsxI{k}}{\mu_k}               \\
		           & = \bsxI{k} - s_k\FFxgradO{\bsxI{k}}{\mu_k}\,(k \geq 0),
	\end{split}
	\elab{SmoothingGradientFlowDiscrete}
\end{equation}
where the correspondence of the discrete- and continuous-time state variables is as follows:
\begin{equation}
	\left\{
	\begin{array}{l}
		\bsx(t_k)\approx\bsxI{k},                                                                    \\
		\dot{\bsx}(t_k) \approx (\bsx(t_{k+1}) - \bsx(t_k))/s_k \approx (\bsxI{k+1} - \bsxI{k})/s_k, \\
		\mu(t_k) = \mu_k\,(k\geq 0,\,\mu_{k+1} - \mu_{k} \leq 0, \mu_k \searrow 0).
	\end{array}
	\right.
\end{equation}
The aforementioned variables are illustrated in \fref{sample}.
Besides the analytical solution of the continuous-time system (i.e., the explicit expression of $\bsx(t)$) is generally unavailable,
the values $\bsx(t_k) \, (k=0,\dots,)$ are called
the sample values of the continuous-time system in this paper and compared with
the discrete-time system in the subsequent theoretical analysis.
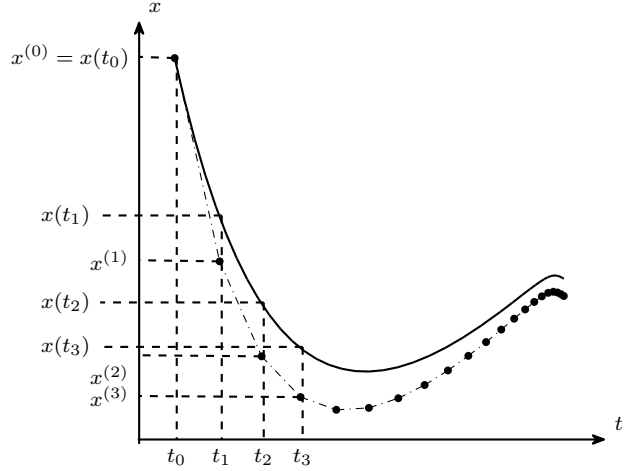
\begin{figure}[H]
	\centering
	\begin{tikzpicture}[xscale = 50, yscale = 50]
		\draw[->, >={Stealth[round]}, thick] (0.9896486534300927,0.5)--(1.113864812268981,0.5) node [above right] {\footnotesize $t$};
		\draw[->, >={Stealth[round]}, thick] (0.99,0.49880633201568786)--(0.99,0.6111873334049689) node [above right] {\footnotesize $x$};
		\coordinate (Euler0) at (1.0,0.6018222499558621);
		\coordinate (Euler1) at (1.0119620664009612,0.5477064861421153);
		\coordinate (Euler2) at (1.0231318337668756,0.5224157553148007);
		\coordinate (Euler3) at (1.0334838987871433,0.5115621590529748);
		\coordinate (Euler4) at (1.043000228160828,0.5081714154647946);
		\coordinate (Euler5) at (1.051671921525995,0.5086986785825993);
		\coordinate (Euler6) at (1.059500730175761,0.5112535313886795);
		\coordinate (Euler7) at (1.0665001077193657,0.5147806121579791);
		\coordinate (Euler8) at (1.0726955835543717,0.5186677522730201);
		\coordinate (Euler9) at (1.0781243155244704,0.5225513507773234);
		\coordinate (Euler10) at (1.0828337901933767,0.5262152016781014);
		\coordinate (Euler11) at (1.0868797751353967,0.5295344954606065);
		\coordinate (Euler12) at (1.0903237517185629,0.532441792415925);
		\coordinate (Euler13) at (1.093230133515338,0.534903183484972);
		\coordinate (Euler14) at (1.0956635846386136,0.5368975711325026);
		\coordinate (Euler15) at (1.097686697380768,0.5383933332941551);
		\coordinate (Euler16) at (1.0993581928883192,0.539323496365365);
		\coordinate (Euler17) at (1.100731703457039,0.539619077010665);
		\coordinate (Euler18) at (1.1018551065864601,0.5394017160355853);
		\coordinate (Euler19) at (1.1027703230873371,0.5389579501391905);
		\coordinate (Euler20) at (1.1035134656990735,0.5384765649673918);
		\fill (Euler0) circle [radius=0.03pt];
		\fill (Euler1) circle [radius=0.03pt];
		\fill (Euler2) circle [radius=0.03pt];
		\fill (Euler3) circle [radius=0.03pt];
		\fill (Euler4) circle [radius=0.03pt];
		\fill (Euler5) circle [radius=0.03pt];
		\fill (Euler6) circle [radius=0.03pt];
		\fill (Euler7) circle [radius=0.03pt];
		\fill (Euler8) circle [radius=0.03pt];
		\fill (Euler9) circle [radius=0.03pt];
		\fill (Euler10) circle [radius=0.03pt];
		\fill (Euler11) circle [radius=0.03pt];
		\fill (Euler12) circle [radius=0.03pt];
		\fill (Euler13) circle [radius=0.03pt];
		\fill (Euler14) circle [radius=0.03pt];
		\fill (Euler15) circle [radius=0.03pt];
		\fill (Euler16) circle [radius=0.03pt];
		\fill (Euler17) circle [radius=0.03pt];
		\fill (Euler18) circle [radius=0.03pt];
		\fill (Euler19) circle [radius=0.03pt];
		\fill (Euler20) circle [radius=0.03pt];
		\draw[dash dot] (Euler0)--(Euler1);
		\draw[dash dot] (Euler1)--(Euler2);
		\draw[dash dot] (Euler2)--(Euler3);
		\draw[dash dot] (Euler3)--(Euler4);
		\draw[dash dot] (Euler4)--(Euler5);
		\draw[dash dot] (Euler5)--(Euler6);
		\draw[dash dot] (Euler6)--(Euler7);
		\draw[dash dot] (Euler7)--(Euler8);
		\draw[dash dot] (Euler8)--(Euler9);
		\draw[dash dot] (Euler9)--(Euler10);
		\draw[dash dot] (Euler10)--(Euler11);
		\draw[dash dot] (Euler11)--(Euler12);
		\draw[dash dot] (Euler12)--(Euler13);
		\draw[dash dot] (Euler13)--(Euler14);
		\draw[dash dot] (Euler14)--(Euler15);
		\draw[dash dot] (Euler15)--(Euler16);
		\draw[dash dot] (Euler16)--(Euler17);
		\draw[dash dot] (Euler17)--(Euler18);
		\draw[dash dot] (Euler18)--(Euler19);
		\draw[dash dot] (Euler19)--(Euler20);
		\draw[dashed, thick] (Euler0)--(0.99,0.6018222499558621) node [left] {\footnotesize $x^{(0)}=x(t_0)$};
		\draw[dashed, thick] (Euler1)--(0.99,0.5477064861421153) node [left] {\footnotesize $x^{(1)}$};
		\draw[dashed, thick] (Euler2)--(0.99,0.5224157553148007) node [below left] {\footnotesize $x^{(2)}$};
		\draw[dashed, thick] (Euler3)--(0.99,0.5115621590529748) node [left] {\footnotesize $x^{(3)}$};
		\coordinate (ODE0) at (1.0,0.6018222499558621);
		\coordinate (ODE1) at (1.0026097222922346,0.590639799941053);
		\coordinate (ODE2) at (1.0053762078606734,0.5800649481043978);
		\coordinate (ODE3) at (1.0081505385749356,0.5706616559834858);
		\coordinate (ODE4) at (1.0109360569309962,0.5623186136667971);
		\coordinate (ODE5) at (1.0137325954780771,0.5549468167088453);
		\coordinate (ODE6) at (1.016519520327677,0.5485088597436256);
		\coordinate (ODE7) at (1.019318702967351,0.5428714026775002);
		\coordinate (ODE8) at (1.0222115362620594,0.5378373059428238);
		\coordinate (ODE9) at (1.0251607523500161,0.5334561404604665);
		\coordinate (ODE10) at (1.028168820954237,0.5296943318748967);
		\coordinate (ODE11) at (1.0312475197633748,0.5265120860753396);
		\coordinate (ODE12) at (1.0344053149370045,0.5238814685483358);
		\coordinate (ODE13) at (1.0376502623307575,0.5217802894425112);
		\coordinate (ODE14) at (1.0409905109536524,0.5201907018680422);
		\coordinate (ODE15) at (1.0444341833638329,0.5190986158261757);
		\coordinate (ODE16) at (1.0479893414343082,0.5184930871579354);
		\coordinate (ODE17) at (1.0516639863528607,0.518365770510843);
		\coordinate (ODE18) at (1.0554660892693157,0.5187104518169342);
		\coordinate (ODE19) at (1.0594036769057156,0.5195226630039654);
		\coordinate (ODE20) at (1.0634850149698436,0.5207993853126323);
		\coordinate (ODE21) at (1.067719137207035,0.5225389266516702);
		\coordinate (ODE22) at (1.0721172589770567,0.5247412838210851);
		\coordinate (ODE23) at (1.0766972774725778,0.5274103747185289);
		\coordinate (ODE24) at (1.0814999617086694,0.5305642622760283);
		\coordinate (ODE25) at (1.0866487996483813,0.534277111417341);
		\coordinate (ODE26) at (1.089867109050898,0.5367269421773271);
		\coordinate (ODE27) at (1.0924585280252805,0.5387359122028959);
		\coordinate (ODE28) at (1.094428770170441,0.5402541480462564);
		\coordinate (ODE29) at (1.0959862461308916,0.5414179953648103);
		\coordinate (ODE30) at (1.0972373804743645,0.5422981401849677);
		\coordinate (ODE31) at (1.0982764361409512,0.542958333105176);
		\coordinate (ODE32) at (1.0991769694920746,0.5434414743647271);
		\coordinate (ODE33) at (1.100077502843198,0.5437922192723997);
		\coordinate (ODE34) at (1.1009085057266885,0.543944428126566);
		\coordinate (ODE35) at (1.1017739278203555,0.5438801142450597);
		\coordinate (ODE36) at (1.102470895279489,0.5436542165906748);
		\coordinate (ODE37) at (1.1031678627386226,0.5432900640157271);
		\coordinate (ODE38) at (1.1035134656990735,0.5430671009257023);
		\draw[thick] (ODE0)--(ODE1);
		\draw[thick] (ODE1)--(ODE2);
		\draw[thick] (ODE2)--(ODE3);
		\draw[thick] (ODE3)--(ODE4);
		\draw[thick] (ODE4)--(ODE5);
		\draw[thick] (ODE5)--(ODE6);
		\draw[thick] (ODE6)--(ODE7);
		\draw[thick] (ODE7)--(ODE8);
		\draw[thick] (ODE8)--(ODE9);
		\draw[thick] (ODE9)--(ODE10);
		\draw[thick] (ODE10)--(ODE11);
		\draw[thick] (ODE11)--(ODE12);
		\draw[thick] (ODE12)--(ODE13);
		\draw[thick] (ODE13)--(ODE14);
		\draw[thick] (ODE14)--(ODE15);
		\draw[thick] (ODE15)--(ODE16);
		\draw[thick] (ODE16)--(ODE17);
		\draw[thick] (ODE17)--(ODE18);
		\draw[thick] (ODE18)--(ODE19);
		\draw[thick] (ODE19)--(ODE20);
		\draw[thick] (ODE20)--(ODE21);
		\draw[thick] (ODE21)--(ODE22);
		\draw[thick] (ODE22)--(ODE23);
		\draw[thick] (ODE23)--(ODE24);
		\draw[thick] (ODE24)--(ODE25);
		\draw[thick] (ODE25)--(ODE26);
		\draw[thick] (ODE26)--(ODE27);
		\draw[thick] (ODE27)--(ODE28);
		\draw[thick] (ODE28)--(ODE29);
		\draw[thick] (ODE29)--(ODE30);
		\draw[thick] (ODE30)--(ODE31);
		\draw[thick] (ODE31)--(ODE32);
		\draw[thick] (ODE32)--(ODE33);
		\draw[thick] (ODE33)--(ODE34);
		\draw[thick] (ODE34)--(ODE35);
		\draw[thick] (ODE35)--(ODE36);
		\draw[thick] (ODE36)--(ODE37);
		\draw[thick] (ODE37)--(ODE38);
		\draw[dashed, thick] (1.0,0.6018222499558621)--(1.0,0.5) node [below] {\footnotesize $t_{0}$};
		\draw[dashed, thick] (1.0119620664009612,0.5596140081832028)--(1.0119620664009612,0.5) node [below] {\footnotesize $t_{1}$};
		\draw[dashed, thick] (1.0231318337668756,0.536470171239448)--(1.0231318337668756,0.5) node [below] {\footnotesize $t_{2}$};
		\draw[dashed, thick] (1.0334838987871433,0.5246490590158022)--(1.0334838987871433,0.5) node [below] {\footnotesize $t_{3}$};
		\draw[dashed, thick] (1.0119620664009612,0.5596140081832028)--(0.9796486534300927,0.5596140081832028) node [left] {\footnotesize $x(t_1)$};
		\draw[dashed, thick] (1.0231318337668756,0.536470171239448)--(0.9796486534300927,0.536470171239448) node [left] {\footnotesize $x(t_2)$};
		\draw[dashed, thick] (1.0334838987871433,0.5246490590158022)--(0.9796486534300927,0.5246490590158022) node [left] {\footnotesize $x(t_3)$};
	\end{tikzpicture}
	\caption{Sample values of the continuous-time system $\bsx(t_k)$ and
		the discrete-time state variable $\bsxI{k}$.}
	\flab{sample}
\end{figure}

The difference equation \eref{SmoothingGradientFlowDiscrete} is called ``the discrete-time system'' in this paper
and indeed the smoothing gradient method for the smooth approximation
$\FF{\cdot}{\mu_k} = \FFO{\cdot}{\mu_k}$, which is $\sigma$-strongly convex and $L_k$-smooth $(L_k:=L+\alpha/\mu_k)$.

\section{Main Results: Convergence Analysis}
This section presents the convergence analysis of the continuous- and discrete-time systems obtained using the forward Euler scheme
with the consideration of the smooth approximation
(i.e., the smoothing gradient flow and smoothing gradient method, respectively);
in both the systems, the Lyapunov-function-based proof is given.
Besides the Lyapunov function for strongly and non-strongly convex objective functions are separately introduced in \cite{Wi18},
the Lyapunov function of this paper addresses both of the strongly and non-strongly convex cases ($\sigma \geq 0$).

\subsection{Continuous-Time System}
Hereafter, the existence of the solution of the continuous-time system  \eref{SmoothingGradientFlow} for Problem~\eref{Prob}
is assumed on a considered finite or infinite time interval, which means $t\in[t_0,T]$ with $t_0 < T$ or $[t_0, +\infty)$, respectively.
\begin{remark}
	Proposition 2.1 of \cite{Qu22} provides the existence and uniqueness of
	the global solution of a similar second-order gradient-based system.
	In this proof, Proposition 6.2.1 in \cite{Ha91},
	which shows the existence and uniqueness of the global solution of
	a form $\dot{\bsx}(t) = \bsf(t,\bsx(t))$, is exploited.
	The aforementioned proposition takes into consideration
	the decreasing smoothing parameter $\mu(t)\searrow 0$ as $t\to+\infty$
	and the Lipschitz smoothness parameter of $\bsf(t,\cdot)$.
	To simplify the discussion, this paper assumes the existence of the solution.
\end{remark}

\begin{theorem}\theoremlab{GradientFlow}
	In the continuous-time system \eref{SmoothingGradientFlow} for Problem~\eref{Prob},
	the following inequality holds:
	\begin{equation}
		\F{\bsx(t)} - \F{\bsx^*}
		\leq \left(\frac{1}{2}\|\bsx(t_0)-\bsx^*\|_2^2
		+ \beta\int_{t_0}^t \ee^{\sigma(\tau-t_0)}\mu(\tau)\,\mathrm{d}\tau
		\right)
		\left(\int_{t_0}^t \ee^{\sigma(\tau-t_0)}\,\mathrm{d}\tau\right)^{-1}.
	\end{equation}
\end{theorem}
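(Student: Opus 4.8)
The plan is to run a weighted-energy (Lyapunov) argument adapted to the time-varying smoothing parameter. Introduce the weight $A(t) := \int_{t_0}^t \ee^{\sigma(\tau-t_0)}\,\mathrm{d}\tau$, so that $A(t_0)=0$ and $\dot A(t) = \ee^{\sigma(t-t_0)} =: a(t)$, and consider the energy function
\[
\mathcal{E}(t) := A(t)\bigl(\FF{\bsx(t)}{\mu(t)} - \F{\bsx^*}\bigr) + \tfrac{1}{2}\,a(t)\,\|\bsx(t)-\bsx^*\|_2^2 .
\]
Since the target inequality is exactly $A(t)(\F{\bsx(t)}-\F{\bsx^*}) \le \tfrac12\|\bsx(t_0)-\bsx^*\|_2^2 + \beta\int_{t_0}^t a(\tau)\mu(\tau)\,\mathrm{d}\tau$ after dividing by $A(t)$, I would aim to bound $\mathcal{E}(t)$ above by its initial value plus a controllable $\mu$-dependent remainder, then discard the nonnegative quadratic term and pass from $\FF{\cdot}{\cdot}$ to $\F{\cdot}$.

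First I would differentiate $\mathcal{E}$. By the chain rule along the flow \eref{SmoothingGradientFlow}, $\tfrac{\mathrm{d}}{\mathrm{d}t}\FF{\bsx(t)}{\mu(t)} = -\|\FFxgrad{\bsx(t)}{\mu(t)}\|_2^2 + \nabla_\mu\hh{\bsx(t)}{\mu(t)}\,\dot\mu(t)$, while $\tfrac{\mathrm{d}}{\mathrm{d}t}[\tfrac12 a\|\bsx-\bsx^*\|_2^2] = \tfrac{\sigma}{2}a\|\bsx-\bsx^*\|_2^2 - a\langle \bsx-\bsx^*, \FFxgrad{\bsx}{\mu}\rangle$. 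The crucial step is to invoke $\sigma$-strong convexity of $\FF{\cdot}{\mu}$ in the form $\langle \FFxgrad{\bsx}{\mu}, \bsx-\bsx^*\rangle \ge \FF{\bsx}{\mu} - \FF{\bsx^*}{\mu} + \tfrac{\sigma}{2}\|\bsx-\bsx^*\|_2^2$, together with $\FF{\bsx^*}{\mu}\le \F{\bsx^*}$ (from \definitionref{1}); this bounds the quadratic-term derivative above by $-a(\FF{\bsx(t)}{\mu(t)}-\F{\bsx^*})$. Summing the two derivatives, the $\pm a(\FF{\bsx(t)}{\mu(t)}-\F{\bsx^*})$ contributions cancel and the $-A\|\FFxgrad{\cdot}{\cdot}\|_2^2$ term is discarded, leaving $\dot{\mathcal{E}}(t) \le A(t)\,\nabla_\mu\hh{\bsx(t)}{\mu(t)}\,\dot\mu(t)$. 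Finally, since $\dot\mu\le 0$ and $-\beta\le \nabla_\mu\hh{\bsx}{\mu}$ (\definitionref{3}), multiplying flips the sign to give $\dot{\mathcal{E}}(t)\le -\beta A(t)\dot\mu(t)$.

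Next I would integrate this over $[t_0,t]$, obtaining $\mathcal{E}(t)-\mathcal{E}(t_0)\le -\beta\int_{t_0}^t A(\tau)\dot\mu(\tau)\,\mathrm{d}\tau$ with $\mathcal{E}(t_0)=\tfrac12\|\bsx(t_0)-\bsx^*\|_2^2$. Integration by parts, using $\dot A=a$ and $A(t_0)=0$, rewrites $\int_{t_0}^t A\dot\mu\,\mathrm{d}\tau = A(t)\mu(t) - \int_{t_0}^t a(\tau)\mu(\tau)\,\mathrm{d}\tau$, which is precisely what converts the smoothing drift $\dot\mu$ into the averaged $\mu(\tau)$ appearing in the statement. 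Dropping the nonnegative quadratic term yields $A(t)(\FF{\bsx(t)}{\mu(t)}-\F{\bsx^*}) \le \tfrac12\|\bsx(t_0)-\bsx^*\|_2^2 - \beta A(t)\mu(t) + \beta\int_{t_0}^t a\mu\,\mathrm{d}\tau$, and using $\FF{\bsx(t)}{\mu(t)} \ge \F{\bsx(t)} - \beta\mu(t)$ (again \definitionref{1}) produces a matching $-\beta A(t)\mu(t)$ on the left that cancels the one on the right. Dividing by $A(t)$ gives the claim.

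I expect the delicate part to be the sign bookkeeping forced by $\dot\mu\le 0$: the $\mu$-derivative term $\nabla_\mu\hh{\cdot}{\cdot}\,\dot\mu$ is \emph{nonnegative}, so it cannot simply be dropped but must be retained and then tamed by the integration by parts, and one must also secure the exact cancellation of the two $\beta A(t)\mu(t)$ terms, which hinges on pairing the lower bound $\FF{\bsx^*}{\mu}\le\F{\bsx^*}$ with the upper bound $\F{\bsx(t)}\le \FF{\bsx(t)}{\mu(t)}+\beta\mu(t)$ of \definitionref{1}. Everything else is a routine Lyapunov computation; the genuinely new ingredient over the standard strongly convex gradient-flow estimate is this integration-by-parts trick, which trades the drift $\beta\dot\mu$ for the averaged quantity $\beta\int_{t_0}^t a\mu\,\mathrm{d}\tau$.
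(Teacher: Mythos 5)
Your proposal is correct and follows essentially the same route as the paper: the same weighted energy with weights $\ee^{\sigma(t-t_0)}$ and $\int_{t_0}^t \ee^{\sigma(\tau-t_0)}\,\mathrm{d}\tau$, the same use of $\sigma$-strong convexity of $\FF{\cdot}{\mu(t)}$, and the same appeal to Definitions~\ref{definition:1} and \ref{definition:3}. The only difference is bookkeeping: the paper builds $\beta\mu(t)$ and $\FF{\bsx^*}{\mu(t)}$ into the Lyapunov function and differentiates them, whereas you keep them outside and recover the averaged term $\beta\int_{t_0}^t \ee^{\sigma(\tau-t_0)}\mu(\tau)\,\mathrm{d}\tau$ by integration by parts at the end --- these are the same identity read in opposite directions.
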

\begin{proof}
	On taking into consideration the existing Lyapunov function for the gradient flow \cite{Wi18}
	and the effect of the smooth approximation (see, e.g., \cite{Ch12b,Qu22,Bi20}),
	the following Lyapunov function candidate is introduced:
	\begin{equation}
		V(t) = \frac{\ee^{\sigma(t-t_0)}}{2}\|\bsx(t)-\bsx^*\|_2^2 +
		\int_{t_0}^t \ee^{\sigma(\tau-t_0)}\,\mathrm{d}\tau
		\left( \FF{\bsx(t)}{\mu(t)} + \beta\mu(t)- \FF{\bsx^*}{\mu(t)}
		\right).
	\end{equation}
	Its time derivative is calculated as follows:
	\begin{equation}
		\begin{split}
			\dot{V}(t) & = \frac{\sigma\ee^{\sigma(t-t_0)}}{2}\|\bsx(t)-\bsx^*\|_2^2
			+ \ee^{\sigma(t-t_0)}
			\bigl(\dot{\bsx}^\top(t)(\bsx(t)-\bsx^*)
			+ \FF{\bsx(t)}{\mu(t)} - \FF{\bsx^*}{\mu(t)}
			\bigr)
			+ \beta\ee^{\sigma(t-t_0)}\mu(t)                                         \\
			           & + \int_{t_0}^t \ee^{\sigma(\tau-t_0)}\,\mathrm{d}\tau
			\left[\nabla_{\bsx}^\top\FF{\bsx(t)}{\mu(t)}\dot{\bsx}(t) +
				\nabla_{\mu}\hh{\bsx(t)}{\mu(t)}\dot{\mu}(t) +\beta\dot{\mu}(t)
				- \nabla_{\mu}\hh{\bsx^*}{\mu(t)}\dot{\mu}(t)
				\right],
		\end{split}
		\elab{GradientFlowProof2}
	\end{equation}
	where $\nabla_{\bsx}^\top\FF{\bsx(t)}{\mu(t)} = [\nabla_{\bsx}\FF{\bsx(t)}{\mu(t)}]^\top$.
	The definition of the continuous-time system $\dot{\bsx}(t) = - \FFxgrad{\bsx(t)}{\mu(t)}$
	and the $\sigma$-strong convexity of $\FF{\cdot}{\mu(t)} = \FFO{\cdot}{\mu(t)}$
	provides the following upper bound of the second term in the right side of \eref{GradientFlowProof2}:
	\begin{equation}
		\begin{split}
			 & \dot{\bsx}^\top(t)(\bsx(t)-\bsx^*)
			+ \FF{\bsx(t)}{\mu(t)} - \FF{\bsx^*}{\mu(t)} \\
			 & =
			\left(\FF{\bsx(t)}{\mu(t)} 	+ \nabla_{\bsx}^\top\FF{\bsx(t)}{\mu(t)}(\bsx^*-\bsx(t))\right)
			-\FF{\bsx^*}{\mu(t)} \leq -\frac{\sigma}{2}\|\bsx^*-\bsx(t)\|_2^2.
		\end{split}
		\elab{GradientFlowProof3}
	\end{equation}
	The non-increase smoothing parameter sequence $\dot{\mu}(t) \leq 0$ and
	inequalities $\nabla_{\mu}\hh{\bsx(t)}{\mu(t)}\dot{\mu}(t) \leq -\beta\dot{\mu}(t)$
	and $- \nabla_{\mu}\hh{\bsx^*}{\mu(t)}\dot{\mu}(t) \leq 0$, obtained using Eq.~\eref{Definition2},
	the fourth term of the right side in Eq.~\eref{GradientFlowProof2} satisfies the following inequality:
	\begin{equation}
		\begin{split}
			\nabla_{\bsx}^\top\FF{\bsx(t)}{\mu(t)}\dot{\bsx}(t) +
			\nabla_{\mu}\hh{\bsx(t)}{\mu(t)}\dot{\mu}(t) +\beta\dot{\mu}(t)
			- \nabla_{\mu}\hh{\bsx^*}{\mu(t)}\dot{\mu}(t)
			\leq - \|\FFxgrad{\bsx(t)}{\mu(t)}\|_2^2.
		\end{split}
		\elab{GradientFlowProof4}
	\end{equation}
	The combination of Eqs.~\eref{GradientFlowProof2},\eref{GradientFlowProof3}, and \eref{GradientFlowProof4} results in
	an inequality
	\begin{equation}
		\begin{split}
			\dot{V}(t) & \leq \beta\ee^{\sigma(t-t_0)}\mu(t) -
			\int_{t_0}^t \ee^{\sigma(\tau-t_0)}\,\mathrm{d}\tau\|\FFxgrad{\bsx(t)}{\mu(t)}\|_2^2
			\leq \beta\ee^{\sigma(t-t_0)}\mu(t).
		\end{split}
	\end{equation}
	The following inequality is obtained by integrating both the side of the inequality above from $t=t_0$ to $t=T$
	and replacing $T=t$:
	\begin{equation}
		V(t) - V(t_0) \leq \beta\int_{t_0}^t \ee^{\sigma(\tau-t_0)}\mu(\tau)\,\mathrm{d}\tau.
		\elab{VtVt0}
	\end{equation}
	In $V(t)$ in the left side, the rearrangement of the item 1) of \definitionref{1}
	results in
	\begin{equation}
		\begin{split}
			\FF{\bsx(t)}{\mu(t)} + \beta\mu(t) - \FF{\bsx^*}{\mu(t)}
			\geq \F{\bsx(t)} - \F{\bsx^*}
		\end{split}
	\end{equation}
	and is applied to $V(t)$ in Eq.~\eref{VtVt0} as follows:
	\begin{equation}
		\begin{split}
			\frac{\ee^{\sigma(t-t_0)}}{2}\|\bsx(t)-\bsx^*\|_2^2 +
			\int_{t_0}^t \ee^{\sigma(\tau-t_0)}\,\mathrm{d}\tau
			\left( \F{\bsx(t)} - \F{\bsx^*}\right)
			\leq V(t_0) + \beta\int_{t_0}^t \ee^{\sigma(\tau-t_0)}\mu(\tau)\,\mathrm{d}\tau.
		\end{split}
		\elab{GradientFlowProof1}
	\end{equation}
	Consequently, the inequality of the theorem is obtained.
\end{proof}

In \theoremref{GradientFlow},
the explicit formula of the convergence rate is considered.
If an integral $\int_{t_0}^t \ee^{\sigma(\tau-t_0)}\mu(\tau)\,\mathrm{d}\tau < + \infty$ is bounded
(e.g., $\mu(t) = \ee^{-\gamma(t-t_0)}$ with $\gamma > \sigma \geq 0$),
the convergence of the upper bound is
$\mathcal{O}\big(\big(\int_{t_0}^t \ee^{\sigma(\tau-t_0)}\,\mathrm{d}\tau\big)^{-1}\big)$.
That is, the effect of the smoothing approximation does not explicitly appear
in terms of the convergence rate,
which is the same as that of the Lipschitz smooth objective function
(i.e., the smoothing approximation is not applied and results in $\mu(t) = 0\,(t\geq 0)$).
In the case of a diverging integral
$\int_{t_0}^t \ee^{\sigma(\tau-t_0)}\mu(\tau)\,\mathrm{d}\tau$,
the L'Hospital's rule and $\lim_{t\to+\infty}\mu(t) = 0$ results in
\begin{equation}
	\lim_{t\to+\infty}
	\left(\frac{1}{2}\|\bsx(t_0)-\bsx^*\|_2^2
	+\beta\int_{t_0}^t \ee^{\sigma(\tau-t_0)}\mu(\tau)\,\mathrm{d}\tau
	\right)
	\left(\int_{t_0}^t \ee^{\sigma(\tau-t_0)}\,\mathrm{d}\tau\right)^{-1}
	= \lim_{t\to+\infty} \beta\mu(t) = 0,
\end{equation}
and the upper bound converges. On summing up the aforementioned discussion,
the following corollary is obtained.
\begin{corollary}\corollarylab{GradientFlowRate}
	In the continuous-time system \eref{SmoothingGradientFlow} for Problem~\eref{Prob},
	$\lim_{t\to+\infty}\F{\bsx(t)} = \F{\bsx^*}$
	if $\mu(t)\searrow0$.
	Especially, if $\int_{t_0}^{+\infty} \ee^{\sigma(\tau-t_0)}\mu(\tau)\,\mathrm{d}\tau < + \infty$,
	\begin{equation}
		\F{\bsx(t)} - \F{\bsx^*}
		=
		\begin{cases}
			\mathcal{O}\left(\ee^{-\sigma t}\right) & (\sigma > 0), \\
			\mathcal{O}\left(1/t\right)             & (\sigma = 0).
		\end{cases}
	\end{equation}
\end{corollary}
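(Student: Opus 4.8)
The plan is to read off both assertions directly from the upper bound established in \theoremref{GradientFlow}, so no new Lyapunov analysis is needed. Write $N(t) := \tfrac12\|\bsx(t_0)-\bsx^*\|_2^2 + \beta\int_{t_0}^t e^{\sigma(\tau-t_0)}\mu(\tau)\,\mathrm{d}\tau$ for the numerator and $D(t) := \int_{t_0}^t e^{\sigma(\tau-t_0)}\,\mathrm{d}\tau$ for the denominator of that bound, so that $\F{\bsx(t)} - \F{\bsx^*} \le N(t)/D(t)$. The first observation I would record is that $D(t) \to +\infty$ as $t \to +\infty$ regardless of the sign of $\sigma$: since $e^{\sigma(\tau-t_0)} \ge 1$ on $[t_0,t]$, we have $D(t) \ge t - t_0$. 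Because the left-hand side is nonnegative, it suffices to show $N(t)/D(t) \to 0$.

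For the limit statement I would split on the behaviour of $N(t)$. If $\int_{t_0}^{+\infty} e^{\sigma(\tau-t_0)}\mu(\tau)\,\mathrm{d}\tau < +\infty$, then $N(t)$ is bounded above by a finite constant while $D(t)\to+\infty$, so the ratio tends to $0$ immediately. If instead that integral diverges, then $N(t)\to+\infty$ and I would invoke L'Hospital's rule for the $\infty/\infty$ form: by the fundamental theorem of calculus $N'(t) = \beta e^{\sigma(t-t_0)}\mu(t)$ and $D'(t) = e^{\sigma(t-t_0)}$, so $N'(t)/D'(t) = \beta\mu(t) \to 0$ by the assumption $\mu(t)\searrow 0$; hence $N(t)/D(t)\to 0$ as well. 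This yields $\lim_{t\to+\infty}\F{\bsx(t)} = \F{\bsx^*}$ in all cases.

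For the explicit rates I would work under the stated hypothesis $\int_{t_0}^{+\infty} e^{\sigma(\tau-t_0)}\mu(\tau)\,\mathrm{d}\tau < +\infty$, which makes $N(t) \le C := \tfrac12\|\bsx(t_0)-\bsx^*\|_2^2 + \beta\int_{t_0}^{+\infty} e^{\sigma(\tau-t_0)}\mu(\tau)\,\mathrm{d}\tau$ for every $t$. The denominator is elementary: for $\sigma > 0$, $D(t) = \sigma^{-1}(e^{\sigma(t-t_0)}-1)$, and for $\sigma = 0$, $D(t) = t - t_0$. In the strongly convex case the bound becomes $C\sigma/(e^{\sigma(t-t_0)}-1)$; for $t$ large enough that $e^{\sigma(t-t_0)} \ge 2$ one has $e^{\sigma(t-t_0)}-1 \ge \tfrac12 e^{\sigma(t-t_0)}$, giving a bound $\le 2C\sigma e^{\sigma t_0}\, e^{-\sigma t}$, that is $\mathcal{O}(e^{-\sigma t})$ with the factor $e^{\sigma t_0}$ absorbed into the constant. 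In the non-strongly convex case the bound is simply $C/(t-t_0) = \mathcal{O}(1/t)$.

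The computations are routine; the only point demanding care is the application of L'Hospital's rule, where I must confirm that its hypotheses hold --- namely that $D(t)\to+\infty$ (already noted), that $N$ and $D$ are differentiable (which follows from the fundamental theorem of calculus since $\mu$ is continuous, consistent with the standing assumption $\dot{\mu}(t)\le 0$ on the smoothing schedule), and that the limit of $N'/D'$ exists. The latter is exactly $\lim_{t\to+\infty}\beta\mu(t) = 0$, so the hypothesis is met and the argument closes.
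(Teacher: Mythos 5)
Your proposal is correct and follows essentially the same route as the paper: the paper also reads both claims off the bound in \theoremref{GradientFlow}, splitting on whether $\int_{t_0}^{t}\ee^{\sigma(\tau-t_0)}\mu(\tau)\,\mathrm{d}\tau$ stays bounded (giving the explicit rates from the elementary evaluation of the denominator) or diverges (applying L'Hospital's rule to obtain $\lim_{t\to+\infty}\beta\mu(t)=0$). Your additional care in checking the hypotheses of L'Hospital's rule and the observation $D(t)\ge t-t_0$ are sound refinements of the same argument.
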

The following proposition immediately follows by \corollaryref{GradientFlowRate} and
the $\sigma$-strong convexity of $f$,
resulting in an inequality $f(\bsx(t)) - f(\bsx^*) \geq (\sigma/2)\|\bsx(t)-\bsx^*\|_2^2$.
Beside the following proposition claims the convergence to the optimal solution,
the convergence rate of $\|\bsx(t)-\bsx^*\|_2^2$ is also obtained by
rearranging Eq.~\eref{GradientFlowProof1} (e.g., $\|\bsx(t)-\bsx^*\|_2^2 = \mathcal{O}(\ee^{-\sigma(t-t_0)})$
if $\int_{t_0}^t \ee^{\sigma(\tau-t_0)}\mu(\tau)\,\mathrm{d}\tau < + \infty$.).
\begin{proposition}\theoremlab{GradientFlowConvergence}
	In the continuous-time system \eref{SmoothingGradientFlow} for Problem~\eref{Prob},
	$\lim_{t\to+\infty} \bsx(t) =  \bsx^*$ if $f$ is strong convex (i.e., $\sigma > 0$).
\end{proposition}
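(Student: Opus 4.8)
The plan is to convert the objective-value convergence already established in \corollaryref{GradientFlowRate} into convergence of the state by sandwiching the squared distance $\|\bsx(t)-\bsx^*\|_2^2$ between zero and a constant multiple of the optimality gap $\F{\bsx(t)}-\F{\bsx^*}$. Concretely, I would first produce a lower bound of the form $\F{\bsx(t)}-\F{\bsx^*}\geq(\sigma/2)\|\bsx(t)-\bsx^*\|_2^2$, valid for all $t\geq t_0$, and then let $t\to+\infty$, at which point \corollaryref{GradientFlowRate} forces the right-hand side's majorant to zero and hence $\|\bsx(t)-\bsx^*\|_2\to 0$.

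The key step is the lower bound. Since $f$ is $\sigma$-strongly convex and $h$ is convex, the sum $F=f+h$ is itself $\sigma$-strongly convex; because $\bsx^*$ minimizes $F$, the optimality condition $\mathbf{0}\in\partial F(\bsx^*)$ holds. Inserting the zero subgradient into the strong-convexity inequality for $F$ evaluated at the pair $(\bsx(t),\bsx^*)$ gives exactly $\F{\bsx(t)}-\F{\bsx^*}\geq(\sigma/2)\|\bsx(t)-\bsx^*\|_2^2$, which is the claimed consequence of strong convexity. Equivalently, one may split the estimate into the strong-convexity bound for $f$ and the subgradient inequality for $h$ using the subgradient $-\nabla f(\bsx^*)\in\partial h(\bsx^*)$ supplied by optimality, so that the linear terms cancel and only the quadratic term survives.

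With this bound in hand the conclusion is immediate: from $0\leq(\sigma/2)\|\bsx(t)-\bsx^*\|_2^2\leq\F{\bsx(t)}-\F{\bsx^*}$ together with $\lim_{t\to+\infty}(\F{\bsx(t)}-\F{\bsx^*})=0$ by \corollaryref{GradientFlowRate}, and using $\sigma>0$, a squeeze argument yields $\lim_{t\to+\infty}\|\bsx(t)-\bsx^*\|_2=0$, i.e.\ $\bsx(t)\to\bsx^*$. The only delicate point—and the one I expect to need the most care—is the lower-bound step when $h$ is non-differentiable: there $\nabla F(\bsx^*)$ need not exist, so the derivation of $\F{\bsx(t)}-\F{\bsx^*}\geq(\sigma/2)\|\bsx(t)-\bsx^*\|_2^2$ must pass through the subgradient optimality condition rather than a first-order stationarity equation. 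Everything else is routine once the sandwich inequality is in place.
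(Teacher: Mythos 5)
Your proposal is correct and follows essentially the same route as the paper, which also derives the claim from \corollaryref{GradientFlowRate} together with a strong-convexity lower bound of the form $(\sigma/2)\|\bsx(t)-\bsx^*\|_2^2$ on the optimality gap. If anything, your version is slightly more careful: the paper states the bound as $f(\bsx(t))-f(\bsx^*)\geq(\sigma/2)\|\bsx(t)-\bsx^*\|_2^2$, which as written needs the subgradient optimality condition $\mathbf{0}\in\partial F(\bsx^*)$ (or equivalently $-\nabla f(\bsx^*)\in\partial h(\bsx^*)$) that you make explicit, since $\bsx^*$ minimizes $F=f+h$ rather than $f$ alone.
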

\begin{remark}\remarklab{attention}
	On taking an example of the smoothing accelerated gradient flow
	and its convergence rate, given by
	$f(\bsx(t))-f(\bsx^*) = \mathcal{O}(1/t^2)$ in \cite{Qu22},
	a primitive (and somewhat misleading) substitution of the discrete step $k$ into $t$
	results in $f(\bsxI{k})-f(\bsx^*) = \mathcal{O}(1/k^2)$; this expression itself has no meaning and
	cannot be directly compared with that of the discrete-time system
	$f(\bsxI{k})-f(\bsx^*) = \mathcal{O}(\log k /k)$ in \cite{Bi20a}.
	Apart from the practical availability of the analysis solution in the continuous-time systems,
	the transform of the timescale results in an arbitrary convergence rate \cite{Us22}
	and the comparison using the indices $t$ and $k$ requires a careful discussion;
	therefore, a unified criterion providing the correspondence between
	the timeline $t$ and the discrete step $k$ is necessary.
	This study focuses on the equivalence of the discretization step in
	the forward Euler scheme and the stepsize in the discrete-time system, and
	proposes the unified correspondence of $t$ and $k$.
\end{remark}

\subsection{Discrete-Time System}
First, particular issues of the discretization step is pointed out here.
The forward Euler discretization step $s_k = t_{k+1} -t_{k}$ in the standard gradient flow,
which is equivalent to the stepsize in the standard gradient method,
for an $L$-smooth objective function $f$
is set as a constant $1/L$ (see, e.g., \cite{Wi18}).
Therefore, the iteration step $k=0,\dots,$ which is illustrated in \fref{timescaleGradient}, increase linearly with respect to
the discretized time $t_k (k=0,\dots,)$.
In contrast,
as presented in the subsequent analysis
and illustrated in \fref{timescaleSmoothingGradient},
the forward Euler discretization step of the smoothing gradient flow
(i.e., the stepsize of the smoothing gradient method)
uses a stepsize $s_k = 1/L_k = 1/(L + \alpha/\mu_k)$
so that the introduced Lyapunov function is non-increasing.
$s_k = 1/L_k$ is also non-increasing and converges to $0$ because $\mu_k$ is non-increasing and converges to $0$,
and the discretized time $t_k = \sum_{\kappa=0}^{k-1} s_{\kappa}$ in a resulting discrete-time system
obtained by the forward Euler scheme does not necessarily diverge to
$+\infty$; whether $t_k $ diverges or not depends on the design of $\mu_k$.
Besides $t_k$ does not directly appear in the discrete-time system,
the subsequent analysis shows that
the divergence of a sum $\sum_{\kappa=0}^{k-1} s_{\kappa} (=t_k)$ is
a required condition to guarantee the convergence to the optimal value.
\begin{figure}[H]
	\begin{minipage}[b]{0.48\columnwidth}
		\centering
		\begin{tikzpicture}[xscale = 1, yscale = 1]
			\draw[->,>=stealth] (0.,0)--(7.,0) node [below right] {\scriptsize$t$};
			\coordinate (t0) at (1,0);
			\draw(1,-0.1)--(1,0.1) node [below=3pt] {\scriptsize$t_0$};
			\coordinate (t1) at (2.0,0);
			\draw(2.0,-0.1)--(2.0,0.1) node [below=3pt] {\scriptsize$t_1$};
			\path[->=Stealth] (t0) edge [bend left] node [above] {\scriptsize$\frac{1}{L}$} (t1);
			\coordinate (t2) at (3.0,0);
			\draw(3.0,-0.1)--(3.0,0.1) node [below=3pt] {\scriptsize$t_2$};
			\path[->=Stealth] (t1) edge [bend left] node [above] {\scriptsize$\frac{1}{L}$} (t2);
			\coordinate (t3) at (4.0,0);
			\draw(4.0,-0.1)--(4.0,0.1) node [below=3pt] {\scriptsize$t_3$};
			\path[->=Stealth] (t2) edge [bend left] node [above] {\scriptsize$\frac{1}{L}$} (t3);
			\coordinate (t4) at (5.0,0);
			\draw(5.0,-0.1)--(5.0,0.1) node [below=3pt] {\scriptsize$t_4$};
			\path[->=Stealth] (t3) edge [bend left] node [above] {\scriptsize$\frac{1}{L}$} (t4);
			\coordinate (t5) at (6.0,0);
			\draw(6.0,-0.1)--(6.0,0.1) node [below=3pt] {\scriptsize$t_5$};
			\path[->=Stealth] (t4) edge [bend left] node [above] {\scriptsize$\frac{1}{L}$} (t5);
			\draw (6.6000000000000005,0) node [below] {\scriptsize$\cdots$};
		\end{tikzpicture}
		\caption{Discretized timeline in the standard gradient flow
			(The discretization step equivalent to the stepsize $s_k$ is a constant $1/L$.).}
		\flab{timescaleGradient}
	\end{minipage}
	\begin{minipage}[b]{0.48\columnwidth}
		\centering
		\begin{tikzpicture}[xscale = 1, yscale = 1]
			\draw[->,>=stealth] (0.,0)--(7.,0) node [below right] {\scriptsize$t$};
			\coordinate (t0) at (1,0);
			\draw(1,-0.1)--(1,0.1) node [below=3pt] {\scriptsize$t_0$};
			\coordinate (t1) at (2.0,0);
			\draw(2.0,-0.1)--(2.0,0.1) node [below=3pt] {\scriptsize$t_1$};
			\path[->=Stealth] (t0) edge [bend left] node [above] {\scriptsize$\frac{1}{L_{0}}$} (t1);
			\coordinate (t2) at (2.7071067811865475,0);
			\draw(2.7071067811865475,-0.1)--(2.7071067811865475,0.1) node [below=3pt] {\scriptsize$t_2$};
			\path[->=Stealth] (t1) edge [bend left] node [above] {\scriptsize$\frac{1}{L_{1}}$} (t2);
			\coordinate (t3) at (3.284457050376173,0);
			\draw(3.284457050376173,-0.1)--(3.284457050376173,0.1) node [below=3pt] {\scriptsize$t_3$};
			\path[->=Stealth] (t2) edge [bend left] node [above] {\scriptsize$\frac{1}{L_{2}}$} (t3);
			\coordinate (t4) at (3.784457050376173,0);
			\draw(3.784457050376173,-0.1)--(3.784457050376173,0.1) node [below=3pt] {\scriptsize$t_4$};
			\path[->=Stealth] (t3) edge [bend left] node [above] {\scriptsize$\frac{1}{L_{3}}$} (t4);
			\coordinate (t5) at (4.2316706458761315,0);
			\draw(4.2316706458761315,-0.1)--(4.2316706458761315,0.1) node [below=3pt] {\scriptsize$t_5$};
			\path[->=Stealth] (t4) edge [bend left] node [above] {\scriptsize$\frac{1}{L_{4}}$} (t5);
			\draw (4.654837710463745,0) node [below] {\scriptsize$\cdots$};
		\end{tikzpicture}
		\caption{Discretized timeline in the smoothing gradient flow
			(The discretization step equivalent to the stepsize $s_k$ is
			the reciprocal of a sequence depending on $L_k$,
			meaning the smoothing parameter sequence $\mu_k$ design.).}
		\flab{timescaleSmoothingGradient}
	\end{minipage}
\end{figure}
The subsequent analysis shows that
the convergence rate of the smoothing gradient method is given by
$\F{\bsxI{k}} - \F{\bsx^*}
	\leq
	\left.\left(\frac{1}{2}\|\bsxI{0}-\bsx^*\|_2^2  + \beta\sum_{\kappa=0}^{k-1}\mu_\kappa s_\kappa\right)
	\middle/ \sum_{\kappa=0}^{k-1}s_\kappa\right.$
for a non-strongly convex function $\sigma=0$,
while the standard gradient method for an $L$-smooth $f$
is $f(\bsxI{k}) - f(\bsx^*) \leq
	\left.
	\frac{1}{2}\|\bsxI{0}-\bsx^*\|_2^2
	\middle/
	\sum_{\kappa=0}^{k-1}s_\kappa\right.$.
Putting aside the term $\beta\sum_{\kappa=0}^{k-1}\mu_\kappa s_\kappa$, caused by the
smooth approximation, the aforementioned decreasing $s_k$
in the denominators $\sum_{\kappa=0}^{k-1}s_\kappa$ indicates that
the convergence rate of the smoothing gradient method is worse than
that of the standard gradient method.

It should be noted that the convergence analysis of the continuous-time system,
presented in the previous section, does not explicitly
include the Lipschitz smoothness parameter of
the smooth approximation $\FF{\cdot}{\mu(t)} = f(\cdot) + \widetilde{h}(\cdot,\mu(t))$;
the aforementioned issues, caused by the varying $\mu_k$, do not arise.

Subsequently, the following theorem provides a Lyapunov function of the discrete-time system.
\begin{theorem}\theoremlab{GradientFlowDiscrete}
	In the discrete-time system \eref{SmoothingGradientFlowDiscrete} for Problem~\eref{Prob} with a stepsize $s_k = 1/\LL{k}$,
	the following inequality holds at $k \geq 1$:
	\begin{equation}
		\F{\bsxI{k}} - \F{\bsx^*}
		\leq
		\left(\frac{1}{2}\|\bsxI{0}-\bsx^*\|_2^2  + \beta\sum_{\kappa=0}^{k-1}\eta_{\kappa+1} \mu_\kappa s_\kappa\right)
		\left(\sum_{\kappa=0}^{k-1}\eta_{\kappa+1} s_\kappa \right)^{-1},
		\elab{GradientFlowDiscrete}
	\end{equation}
	where $\eta_k = \prod_{l=0}^{k-1}(1-\sigma s_l)^{-1}$.
\end{theorem}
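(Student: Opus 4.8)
The plan is to transcribe the continuous-time argument of \theoremref{GradientFlow} into a discrete Lyapunov-sequence argument, replacing $\ee^{\sigma(t-t_0)}$ by $\eta_k = \prod_{l=0}^{k-1}(1-\sigma s_l)^{-1}$ and the integral $\int_{t_0}^t \ee^{\sigma(\tau-t_0)}\,\mathrm{d}\tau$ by $S_k := \sum_{\kappa=0}^{k-1}\eta_{\kappa+1}s_\kappa$. Concretely, I would introduce
\[
V_k = \frac{\eta_k}{2}\|\bsxI{k}-\bsx^*\|_2^2 + S_k\bigl(\FF{\bsxI{k}}{\mu_k} + \beta\mu_k - \FF{\bsx^*}{\mu_k}\bigr),
\]
and aim for the one-step estimate $V_{k+1} - V_k \leq \beta\eta_{k+1}\mu_k s_k$, the discrete counterpart of $\dot{V}(t) \leq \beta\ee^{\sigma(t-t_0)}\mu(t)$. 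Because the current function value (not an accumulation of past values) sits inside $V_k$, telescoping will not require any monotonicity of $\F{\bsxI{k}}$, exactly as in the continuous case.

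First I would derive the distance recursion. Expanding $\|\bsxI{k+1}-\bsx^*\|_2^2$ from \eref{SmoothingGradientFlowDiscrete}, bounding the cross term by the $\sigma$-strong convexity of $\FF{\cdot}{\mu_k}$, and absorbing the $s_k^2\|\FFxgrad{\bsxI{k}}{\mu_k}\|_2^2$ term with the descent inequality $\FF{\bsxI{k+1}}{\mu_k} \leq \FF{\bsxI{k}}{\mu_k} - (s_k/2)\|\FFxgrad{\bsxI{k}}{\mu_k}\|_2^2$ (which holds since $\FF{\cdot}{\mu_k}$ is $\LL{k}$-smooth and $s_k = 1/\LL{k}$) gives
\[
\|\bsxI{k+1}-\bsx^*\|_2^2 \leq (1-\sigma s_k)\|\bsxI{k}-\bsx^*\|_2^2 - 2s_k\bigl(\FF{\bsxI{k+1}}{\mu_k} - \FF{\bsx^*}{\mu_k}\bigr).
\]
Multiplying by $\eta_{k+1}/2$ and using the defining identity $\eta_{k+1}(1-\sigma s_k) = \eta_k$ turns the leading term on the right into the distance part of $V_k$.

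I expect the main obstacle to be reconciling the change of smoothing parameter from $\mu_k$ to $\mu_{k+1}$ inside the function-value part of $V_{k+1}$, which in continuous time was handled by the $\nabla_\mu\hh{\cdot}{\mu}\dot{\mu}$ terms. Writing $V_{k+1}-V_k$ as the distance increment plus $\eta_{k+1}s_k D_k + S_{k+1}(D_{k+1}-D_k)$ with $D_k := \FF{\bsxI{k}}{\mu_k} + \beta\mu_k - \FF{\bsx^*}{\mu_k}$, I would control $D_{k+1}-D_k$ with the discrete Lipschitz-in-$\mu$ bound
\[
0 \leq \hh{\bsx}{\mu_{k+1}} - \hh{\bsx}{\mu_k} \leq \beta(\mu_k - \mu_{k+1}),
\]
obtained by integrating \eref{Definition2} from $\mu_{k+1}$ to $\mu_k$. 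This makes the $\beta(\mu_{k+1}-\mu_k)$ contributions cancel, the $\bsx^*$-term drop (its $\mu$-difference is non-negative), and leaves $D_{k+1}-D_k \leq \FF{\bsxI{k+1}}{\mu_k} - \FF{\bsxI{k}}{\mu_k}$. Combining with the distance recursion, the $\FF{\bsxI{k+1}}{\mu_k}$ terms recombine and the coefficient multiplying $\FF{\bsxI{k+1}}{\mu_k}-\FF{\bsxI{k}}{\mu_k}$ collapses to $S_{k+1}-\eta_{k+1}s_k = S_k \geq 0$; since the descent inequality makes that difference non-positive, the residual is discarded and the target estimate $V_{k+1}-V_k \leq \beta\eta_{k+1}\mu_k s_k$ follows.

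Finally I would telescope over $\kappa=0,\dots,k-1$ to obtain $V_k - V_0 \leq \beta\sum_{\kappa=0}^{k-1}\eta_{\kappa+1}\mu_\kappa s_\kappa$, where $V_0 = \tfrac12\|\bsxI{0}-\bsx^*\|_2^2$ since $\eta_0 = 1$ and the empty sum gives $S_0 = 0$. Lower-bounding $V_k \geq S_k(\F{\bsxI{k}}-\F{\bsx^*})$ via item 1) of \definitionref{1}, which yields $\FF{\bsxI{k}}{\mu_k}+\beta\mu_k \geq \F{\bsxI{k}}$ and $\FF{\bsx^*}{\mu_k} \leq \F{\bsx^*}$, together with the non-negativity of the distance term, and then dividing by $S_k = \sum_{\kappa=0}^{k-1}\eta_{\kappa+1}s_\kappa$, produces \eref{GradientFlowDiscrete}.
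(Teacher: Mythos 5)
Your proposal is correct and follows essentially the same route as the paper: the identical Lyapunov sequence $V_k$, the same splitting of $V_{k+1}-V_k$ into a distance increment and a weighted function-value increment, the same use of the Lipschitz-in-$\mu$ bound from \definitionref{3} to pass from $\mu_{k+1}$ back to $\mu_k$, and the same telescoping plus \definitionref{1} at the end. The only difference is bookkeeping: you invoke the descent inequality early to absorb the $s_k^2\|\FFxgrad{\bsxI{k}}{\mu_k}\|_2^2$ term into the distance recursion, whereas the paper carries the gradient-norm terms explicitly and verifies at the last step that their aggregate coefficient is non-positive for $s_k \leq 1/\LL{k}$.
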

\begin{proof}
	In analogy to the Lyapunov function for the continuous-time system,
	a Lyapunov function candidate is introduced by taking into consideration the existing Lyapunov function for the standard gradient method \cite{Wi18}
	and the effect of the smooth approximation (see, e.g., \cite{Ch12b,Qu22,Bi20}) as follows:
	\begin{equation}
		\begin{split}
			V^{(k)} & = \frac{\eta_k}{2}\|\bsxI{k}-\bsx^*\|_2^2
			+
			\left(\sum_{\kappa=0}^{k-1}\eta_{\kappa+1} s_\kappa\right)
			\left(\FF{\bsxI{k}}{\mu_k} + \beta\mu_k - \FF{\bsx^*}{\mu_{k}} \right).
		\end{split}
		\elab{GradientFlowDiscreteProof5}
	\end{equation}
	For the subsequent discussion on the stepsize $s_k$ design,
	the substitution of $s_k = 1/\LL{k}$ is withheld until the last part of the proof.
	It should be noted that zero-division does not occur in
	the calculation of $\eta_k = \prod_{l=0}^{k-1}(1-\sigma s_l)^{-1}$ because
	the smoothing parameter $L_k=L+\alpha/\mu_k > L$ results in an inequality
	$1-\sigma s_k = 1-\sigma / L_k > 1 -\sigma / L \geq 0$.
	This proof shows an upper bound of the difference of the Lyapunov function candidate $V^{(k+1)}$,
	which is $V^{(k+1)} - V^{(k)}$.
	The calculation of $V^{(k+1)} - V^{(k)}$ is separated into several terms
	to simplify the calculation.
	On recalling a recursive formula $\eta_k = (1-\sigma s_k)\eta_{k+1}$ for $k \geq 0$,
	the difference of the squared $\ell_2$-norm is given by
	\begin{equation}
		\begin{split}
			\frac{\eta_{k+1}}{2}\|\bsxI{k+1}-\bsx^*\|_2^2
			- \frac{\eta_{k}}{2}\|\bsxI{k}-\bsx^*\|_2^2
			= \frac{\eta_{k+1}}{2}
			\left(
			\|\bsxI{k+1}-\bsx^*\|_2^2 - (1-\sigma s_{k})\|\bsxI{k}-\bsx^*\|_2^2
			\right).
		\end{split}
		\elab{GradientFlowDiscreteProof4}
	\end{equation}
	The following rearrangement holds:
	\begin{equation}
		\begin{split}
			 & \|\bsxI{k+1} - \bsx^* \|_2^2 - \|\bsxI{k} - \bsx^* \|_2^2
			= (\bsxI{k+1} + \bsxI{k} - 2\bsx^*)^\top(\bsxI{k+1}-\bsxI{k}) \\
			 & = (\bsxI{k+1}-\bsxI{k})^\top(\bsxI{k+1}-\bsxI{k})
			+ 2(\bsxI{k}-\bsx^*)^\top(\bsxI{k+1}-\bsxI{k})                \\
			 & = s_k^2 \|\FFxgrad{\bsxI{k}}{\mu_{k}}\|_2^2
			+2s_k\nabla_{\bsx}^\top \FF{\bsxI{k}}{\mu_{k}}(\bsx^*-\bsxI{k}).
		\end{split}
	\end{equation}
	The combination of the two equations above results in the following equation:
	\begin{equation}
		\begin{split}
			 & \frac{\eta_{k+1}}{2}\|\bsxI{k+1}-\bsx^*\|_2^2
			- \frac{\eta_{k}}{2}\|\bsxI{k}-\bsx^*\|_2^2      \\
			 & = \frac{\eta_{k+1}}{2}
			\left(
			\big[\|\bsxI{k+1}-\bsx^*\|_2^2 - \|\bsxI{k}-\bsx^*\|_2^2\big]
			+ \sigma s_{k}\|\bsxI{k}-\bsx^*\|_2^2
			\right)                                          \\
			 & = \eta_{k+1}s_k
			\left(
			\frac{s_k}{2}\|\FFxgrad{\bsxI{k}}{\mu_{k}}\|_2^2
			+\left[\nabla_{\bsx}^\top\FF{\bsxI{k}}{\mu_{k}}(\bsx^*-\bsxI{k})
				+\frac{\sigma}{2}\|\bsx^*-\bsxI{k}\|_2^2
				\right]
			\right).
		\end{split}
		\elab{GradientFlowDiscreteProof7}
	\end{equation}
	Next, the difference of the sum terms in $V^{(k+1)} - V^{(k)}$ is considered.
	On recalling $\mu_{k} \geq \mu_{k+1}$, the inequality in \definitionref{3} is used
	and results in inequalities
	$- \beta(\mu_{k}-\mu_{k+1}) \leq
		\hh{\bsx}{\mu_{k}} - \hh{\bsx}{\mu_{k+1}}
		\leq 0$ and
	$- \beta(\mu_{k}-\mu_{k+1}) \leq
		\FF{\bsx}{\mu_{k}} - \FF{\bsx}{\mu_{k+1}}
		\leq 0$, which is separated into two inequalities:
	\begin{equation}
		\begin{cases}
			\FF{\bsx}{\mu_{k+1}} + \beta\mu_{k+1} \leq \FF{\bsx}{\mu_{k}} + \beta\mu_{k}, \\
			- \FF{\bsx}{\mu_{k+1}} \leq -\FF{\bsx}{\mu_{k}}.
		\end{cases}
		\elab{GradientFlowDiscreteProof8}
	\end{equation}
	The difference related to the objective function $f$ in the Lyapunov function is
	upper bounded as the following inequality.
	On using Eq.~\eref{GradientFlowDiscreteProof8} and
	a difference
	\begin{equation}
		\sum_{\kappa=0}^{k}\eta_{\kappa+1} s_\kappa - \sum_{\kappa=0}^{k-1}\eta_{\kappa+1} s_\kappa = \eta_{k+1} s_{k},
		\elab{GradientFlowDiscreteProof9}
	\end{equation}
	the following formula holds:
	\begin{equation}
		\begin{split}
			 & \Bigl(\sum_{\kappa=0}^{k}\eta_{\kappa+1} s_\kappa\Bigr)
			\bigl(\FF{\bsxI{k+1}}{\mu_{k+1}} + \beta\mu_{k+1} - \FF{\bsx^*}{\mu_{k+1}} \bigr)
			- \Bigl(\sum_{\kappa=0}^{k-1}\eta_{\kappa+1} s_\kappa\Bigr)
			\big(\FF{\bsxI{k}}{\mu_k} + \beta\mu_k - \FF{\bsx^*}{\mu_k} \big) \\
			 & \leq \Bigl(\sum_{\kappa=0}^{k}\eta_{\kappa+1} s_\kappa\Bigr)
			\bigl(\FF{\bsxI{k+1}}{\mu_k} + \beta\mu_{k} - \FF{\bsx^*}{\mu_k} \bigr)
			- \Bigl(\sum_{\kappa=0}^{k-1}\eta_{\kappa+1} s_\kappa\Bigr)
			\bigl(\FF{\bsxI{k}}{\mu_k}+ \beta\mu_k - \FF{\bsx^*}{\mu_k}\bigr) \\
			 & = \eta_{k+1} s_{k}
			\bigl(\FF{\bsxI{k+1}}{\mu_k} + \beta\mu_{k} - \FF{\bsx^*}{\mu_k}\bigr)
			+ \Bigl(\sum_{\kappa=0}^{k-1}\eta_{\kappa+1} s_\kappa\Bigr)
			\Bigl(\FF{\bsxI{k+1}}{\mu_k}  - \FF{\bsxI{k}}{\mu_k}
			\Bigr)                                                            \\
			 & = \eta_{k+1} s_{k}
			\bigl(\FF{\bsxI{k}}{\mu_k} + \beta\mu_k - \FF{\bsx^*}{\mu_k}\bigr)+ \Bigl(\sum_{\kappa=0}^{k}\eta_{\kappa+1} s_\kappa\Bigr)
			\bigl(
			\FF{\bsxI{k+1}}{\mu_k} - \FF{\bsxI{k}}{\mu_k}
			\bigr).
		\end{split}
		\elab{GradientFlowDiscreteProof1}
	\end{equation}
	For the first term in the last line of the formula above,
	the $\sigma$-strong convexity of $\FF{\cdot}{\mu_k} = f(\cdot)+\hh{\cdot}{\mu_k}$ results in an inequality
	\begin{equation}
		\FF{\bsx^*}{\mu_k} - \FF{\bsxI{k}}{\mu_k}
		\geq \nabla_{\bsx}^\top \FF{\bsxI{k}}{\mu_k}(\bsx^*-\bsxI{k})
		+ \dfrac{\sigma}{2}\|\bsx^*-\bsxI{k}\|_2^2,
	\end{equation}
	and cancels out the difference of the squared $\ell_2$-norms in the last line of Eq.~\eref{GradientFlowDiscreteProof7}.
	In the second term of the last line of Eq.~\eref{GradientFlowDiscreteProof1},
	the $L_k$-smoothness of $\FF{\cdot}{\mu_k}=f(\cdot)+\hh{\cdot}{\mu_k}$ results in an inequality
	\begin{equation}
		\begin{split}
			\FF{\bsxI{k+1}}{\mu_{k}} - \FF{\bsxI{k}}{\mu_k}
			 & \leq \nabla_{\bsx}^\top \FF{\bsxI{k}}{\mu_k}
			(\bsxI{k+1}-\bsxI{k}) + \frac{\LL{k}}{2}\|\bsxI{k+1}-\bsxI{k}\|_2^2            \\
			 & = \left(-s_k+\frac{\LL{k}s_k^2}{2}\right)\|\FFxgrad{\bsxI{k}}{\mu_k}\|_2^2.
		\end{split}
	\end{equation}
	In the difference $V^{(k+1)} - V^{(k)}$,
	the aforementioned discussion calculates the difference of the squared $\ell_2$-norms (Eq.~\eref{GradientFlowDiscreteProof7})
	and derives an upper bound of the difference of the sums (Eq.~\eref{GradientFlowDiscreteProof1}).
	Consequently, the following upper bound is obtained:
	\begin{equation}
		\begin{split}
			 & V^{(k+1)} - V^{(k)}                                                                                                                                \\
			 & \leq \eta_{k+1}s_k
			\left(
			\frac{s_k}{2}\|\nabla_{\bsx}\FF{\bsxI{k}}{\mu_{k}}\|_2^2
			+\left[\nabla_{\bsx}^\top\FF{\bsxI{k}}{\mu_{k}}(\bsx^*-\bsxI{k})
				+\frac{\sigma}{2}\|\bsx^*-\bsxI{k}\|_2^2
				\right]
			\right)                                                                                                                                               \\
			 & \quad  +   \eta_{k+1}s_k
			\left(\beta\mu_k - \nabla_{\bsx}^\top\FF{\bsxI{k}}{\mu_{k}}(\bsx^*-\bsxI{k})
			- \dfrac{\sigma}{2}\|\bsx^*-\bsxI{k}\|_2^2 \right)                                                                                                    \\
			 & \quad + \Bigl(\sum_{\kappa=0}^{k}\eta_{\kappa+1} s_\kappa\Bigr) \bigl(-s_k+\frac{\LL{k}s_k^2}{2}\bigr)\|\nabla_{\bsx} \FF{\bsxI{k}}{\mu_{k}}\|_2^2 \\
			 & \leq \beta \eta_{k+1}\mu_k s_k
			+
			s_k\left[\frac{\eta_{k+1}s_k}{2}
				+\Bigl(\sum_{\kappa=0}^{k}\eta_{\kappa+1} s_\kappa\Bigr)\Bigl(-1+\frac{\LL{k}s_k}{2}\Bigr)
				\right]
			\|\nabla_{\bsx}\FF{\bsxI{k}}{\mu_{k}}\|_2^2                                                                                                           \\
			 & = \beta \eta_{k+1}\mu_k s_k
			+
			s_k \left[-\frac{1}{2}\Bigl(\sum_{\kappa=0}^{k-1}\eta_{\kappa+1} s_\kappa\Bigr)
				+\Bigl(\sum_{\kappa=0}^{k}\eta_{\kappa+1} s_\kappa\Bigr) \Bigl(\frac{-1+\LL{k}s_k}{2}\Bigr)
				\right]
			\|\nabla_{\bsx}\FF{\bsxI{k}}{\mu_{k}}\|_2^2,
		\end{split}
		\elab{GradientFlowDiscreteProof2}
	\end{equation}
	where the last equality follows from Eq.~\eref{GradientFlowDiscreteProof9}.
	Similar to the conventional analysis of the gradient method and related stepsize design,
	a stepsize $s_k$ satisfying $0 \leq s_k \leq 1/\LL{k}$
	makes the second term of the last line of Eq.~\eref{GradientFlowDiscreteProof2} non-positive.
	On summing up the inequality \eref{GradientFlowDiscreteProof2} for $k$, an inequality
	$V^{(k)} - V^{(0)} \leq \beta\sum_{\kappa=0}^{k-1}\eta_{\kappa+1} \mu_\kappa s_\kappa$ follows.
	The first inequality in \definitionref{1} and this inequality
	result in the first and second inequalities of the following formula, respectively:
	\begin{equation}
		\begin{split}
			 & \frac{\eta_k}{2}\|\bsxI{k}-\bsx^*\|_2^2
			+
			\Bigl(\sum_{\kappa=0}^{k-1}\eta_{\kappa+1} s_\kappa\Bigr)
			\bigl(\F{\bsxI{k}} - \F{\bsx^*}\bigr)                                                       \\
			 & \leq
			\frac{\eta_k}{2}\|\bsxI{k}-\bsx^*\|_2^2
			+
			\Bigl(\sum_{\kappa=0}^{k-1}\eta_{\kappa+1} s_\kappa\Bigr)
			\bigl(\FF{\bsxI{k}}{\mu_k} + \beta\mu_k - \FF{\bsx^*}{\mu_k}\bigr)                          \\
			 & = V^{(k)} \leq V^{(0)} + \beta\sum_{\kappa=0}^{k-1}\eta_{\kappa+1} \mu_\kappa s_\kappa =
			\frac{1}{2}\|\bsxI{0}-\bsx^*\|_2^2  + \beta\sum_{\kappa=0}^{k-1}\eta_{\kappa+1} \mu_\kappa s_\kappa.
		\end{split}
		\elab{GradientFlowDiscreteProof6}
	\end{equation}
	On rearranging the inequality above, the inequality of the theorem is obtained.
\end{proof}
The aforementioned proof indicates that the stepsize $s_k = 1/\LL{k}$, which depends on the smoothing parameter $\mu_k$, is chosen so that
the coefficient of a squared $\ell_2$-norm $\|\nabla_{\bsx}\FF{\bsxI{k}}{\mu_{k}}\|_2^2$ in Eq.~\eref{GradientFlowDiscreteProof2} is non-positive.
The upper bound $\left((1/2)\|\bsxI{0}-\bsx^*\|_2^2  + \beta\sum_{\kappa=0}^{k-1}\eta_{\kappa+1} \mu_\kappa s_\kappa\right)
	\left(\sum_{\kappa=0}^{k-1}\eta_{\kappa+1} s_\kappa \right)^{-1}$
in \theoremref{GradientFlowDiscrete} seems somewhat complicated because of
$\eta_k = \prod_{l=0}^{k-1}(1-\sigma s_l)^{-1}$, depending on $s_k = 1/L_{k} = (L+\alpha/\mu_k)^{-1}$.

A case that $\sum_{\kappa=0}^{k-1}\eta_{\kappa+1} s_\kappa$ diverges to $+\infty$ is taken into consideration here.
\begin{itemize}
	\item  If $\sum_{\kappa=0}^{k-1}\eta_{\kappa+1} \mu_\kappa s_\kappa < + \infty$, the aforementioned upper bound converges to $0$.
	\item  If $\sum_{\kappa=0}^{k-1}\eta_{\kappa+1} \mu_\kappa s_\kappa$ also diverges to $+\infty$,
	      the Stolz--Ces\`{a}ro theorem (i.e., a discrete sequence version of the L'Hospital's rule,
	      see, e.g., \cite{Fu13}) with $\mu_k\searrow0$
	      shows the convergence to the optimal value:
	      \begin{equation}
		      \lim_{k\to+\infty}
		      \left(\frac{1}{2}\|\bsxI{0}-\bsx^*\|_2^2  + \beta\sum_{\kappa=0}^{k-1}\eta_{\kappa+1} \mu_\kappa s_\kappa\right)
		      \left(\sum_{\kappa=0}^{k-1}\eta_{\kappa+1} s_\kappa \right)^{-1}
		      = \lim_{k\to+\infty}\beta\mu_{k+1} = 0.
		      \elab{limit1}
	      \end{equation}
\end{itemize}
Therefore, if $\sum_{\kappa=0}^{+\infty}\eta_{\kappa+1} s_\kappa$ diverges, the convergence to the optimal solution is achieved.
However, whether $\sum_{\kappa=0}^{+\infty}\eta_{\kappa+1} s_\kappa$ diverges or is bounded is still not clear
because of the complicated $\eta_k$.
The following lemma claims that the convergence is equivalent to the convergence of $\sum_{\kappa=0}^{+\infty}s_\kappa$.
\begin{lemma}\lemmalab{etaSum}
	For a stepsize $s_k = 1/\LL{k}$,
	$\sum_{\kappa=0}^{k-1}s_\kappa$ converges if and only if
	$\sum_{\kappa=0}^{k-1}\eta_{\kappa+1}  s_\kappa$ converges.
\end{lemma}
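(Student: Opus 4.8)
The plan is to treat the two implications asymmetrically, since one direction is an immediate term-wise comparison while the other rests on the classical equivalence between an infinite product and the associated series. First I would dispose of the degenerate case $\sigma = 0$, in which every factor $(1-\sigma s_l)^{-1}$ equals $1$, so $\eta_{\kappa+1} \equiv 1$ and the two sums literally coincide. Henceforth assume $\sigma > 0$. The sign structure of the factors is the engine of the whole argument: as already observed in the proof of \theoremref{GradientFlowDiscrete}, $1 - \sigma s_l = 1 - \sigma/\LL{l} \in (0,1]$, so each $(1-\sigma s_l)^{-1} \geq 1$ and hence $\eta_{\kappa+1} = \prod_{l=0}^{\kappa}(1-\sigma s_l)^{-1}$ is a nondecreasing sequence bounded below by $1$.

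The \emph{easy} implication is that convergence of $\sum_{\kappa}\eta_{\kappa+1} s_\kappa$ forces convergence of $\sum_{\kappa} s_\kappa$. Because $\eta_{\kappa+1} \geq 1$, one has $s_\kappa \leq \eta_{\kappa+1} s_\kappa$ term by term; both series have nonnegative terms, so the partial sums of $\sum_{\kappa} s_\kappa$ are dominated by those of $\sum_{\kappa}\eta_{\kappa+1} s_\kappa$ and are therefore bounded, whence convergent.

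For the converse, assume $\sum_{\kappa} s_\kappa < +\infty$; I would show that the sequence $\eta_{\kappa+1}$ is bounded, which again reduces the claim to a comparison. Convergence of $\sum_{\kappa} s_\kappa$ gives $s_\kappa \to 0$, so there is an index $N$ with $\sigma s_l \leq 1/2$ for all $l \geq N$. Using the elementary bound $-\log(1-x) \leq 2x$ valid on $[0,1/2]$, I would estimate $\log \eta_{k} = \sum_{l=0}^{k-1}\bigl(-\log(1-\sigma s_l)\bigr)$ by splitting off the finitely many terms with $l < N$ and bounding the tail by $2\sigma\sum_{l \geq N} s_l < +\infty$. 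Thus $\log \eta_k$ stays bounded, so $\eta_k \nearrow \eta_\infty$ for some finite $\eta_\infty \geq 1$, and consequently $\sum_{\kappa}\eta_{\kappa+1} s_\kappa \leq \eta_\infty \sum_{\kappa} s_\kappa < +\infty$.

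The only genuinely nontrivial step is this boundedness of $\eta_{\kappa+1}$, which is exactly the classical statement that $\prod_{l}(1-\sigma s_l)^{-1}$ converges precisely when $\sum_{l}\sigma s_l$ does; the logarithmic estimate above is simply the cleanest way to verify it under the present hypotheses. Everything else is monotone-comparison bookkeeping, and the argument never uses the precise form $s_k = 1/\LL{k}$ beyond the inequality $1 - \sigma s_k \in (0,1]$ that it guarantees.
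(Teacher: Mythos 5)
Your proof is correct and follows essentially the same route as the paper: dispose of $\sigma=0$ separately, obtain the easy direction from $\eta_{\kappa+1}\geq 1$ by termwise comparison, and reduce the converse to the boundedness of $\eta_k$ under $\sum_{\kappa}s_\kappa<+\infty$. The only difference is the technique for that last step---you use the logarithmic estimate $-\log(1-x)\leq 2x$ on $[0,1/2]$, whereas the paper expands the partial product via the Weierstrass-type inequality $\prod_{l=N}^{k-1}(1-\sigma s_l) > 1-\sigma\sum_{l=N}^{k-1}s_l$ after choosing $N$ with $\sum_{l\geq N}s_l<1/\sigma$; both are standard and equally valid.
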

\begin{proof}
	The case of $\sigma = 0$, meaning a non-strong convex $f$, results in $\eta_k = 1$,
	and the two sums are the same; therefore, the case of $\sigma > 0$ is taken into consideration here.
	It should be noted that the sums converge or diverge
	because $\sum_{\kappa=0}^{k-1}s_\kappa$ and $\sum_{\kappa=0}^{k-1}\eta_{\kappa+1} s_\kappa$
	are both strictly increasing.
	This proof shows that the convergence of each sum is equivalent.
	The first-order Taylor expansion of the exponential $\ee^{-\delta} \geq 1 - \delta$
	results in $\ee^{\delta} \leq (1 - \delta)^{-1}$ for $\delta < 1$ by taking the reciprocal.
	The substitution of $\delta = \sigma s_k= \sigma / (L + \alpha/\mu_k) < \sigma/L \leq 1$
	results in the following lower bound of $\eta_k$:
	\begin{equation}
		\ee^{\sigma(s_{k-1}+\dots+s_0)}
		\leq \prod_{l=0}^{k-1}(1-\sigma s_l)^{-1} = \eta_k.
		\elab{eta}
	\end{equation}
	Therefore, $\eta_k \geq \ee^{\sigma(s_{k-1}+\cdots+s_0)} > 1$ and
	$\sum_{\kappa=0}^{k-1}s_\kappa < \sum_{\kappa=0}^{k-1}\eta_{\kappa+1} s_\kappa$.
	If $\sum_{\kappa=0}^{k-1}\eta_{\kappa+1} s_\kappa$ converges,
	$\sum_{\kappa=0}^{k-1}s_\kappa$ also converges.

	The opposite is shown in the remainder of this proof.
	$\sum_{\kappa=0}^{+\infty}\eta_{\kappa+1} s_\kappa$ is bounded
	if $\eta_k$ and $\sum_{\kappa=0}^{k-1} s_\kappa$ are bounded
	because $\sum_{\kappa=0}^{k-1}\eta_{\kappa+1} s_\kappa
		\leq \max_{\kappa=1,\dots,k}\eta_\kappa
		\left(\sum_{\kappa=0}^{k-1}s_\kappa \right)$.
	There exists a sufficiently large $N$ satisfying an inequality because $\sum_{\kappa=0}^{N-1}s_\kappa $ converges:
	\begin{equation}
		\sum_{\kappa=N}^{+\infty} s_\kappa < 1/\sigma
		\elab{etaProof1}
	\end{equation}
	The reciprocal of $\eta_k/ \eta_N = \prod_{l=N}^{k-1}(1-\sigma s_l)^{-1}$ is lower-bounded as follows:
	\begin{equation}
		\begin{split}
			\eta_N / \eta_k
			 & = \prod_{l=N}^{k-1}(1-\sigma s_l)
			= (1-\sigma s_{k-1})(1-\sigma s_{k-2})\prod_{l=N}^{k-3}(1-\sigma s_l)                          \\
			 & = (1 - \sigma(s_{k-1} + s_{k-2}) + \sigma^2 s_{k-1} s_{k-2})\prod_{l=N}^{k-3}(1-\sigma s_l) \\
			 & > (1 - \sigma(s_{k-1} + s_{k-2}))\prod_{l=N}^{k-3}(1-\sigma s_l)
			> \dots
			> 1 - \sigma\sum_{l = N}^{k-1}s_l > 0.
		\end{split}
	\end{equation}
	The inequality above results in $\eta_k < \eta_N (1 - \sigma\sum_{l = N}^{k-1}s_l)^{-1}$.
	On substituting $k = N$ in Eq.~\eref{etaProof1}, an inequality $\sum_{\kappa=N}^{+\infty} s_\kappa < 1/\sigma$ hold,
	and taking the limit $k\to+\infty$ of $\eta_k < \eta_N (1 - \sigma\sum_{l = N}^{k-1}s_l)^{-1}$
	shows $\eta_k < + \infty$. Consequently,
	the divergence of $\sum_{\kappa=0}^{k-1}s_\kappa$ and $\sum_{\kappa=0}^{k-1}\eta_{\kappa+1}s_\kappa$ is equivalent,
	and the claim of the lemma is obtained.
\end{proof}
On using Eq.~\eref{limit1} and \lemmaref{etaSum},
the following corollary of \theoremref{GradientFlowDiscrete} is obtained.
It should be noted that the corollary does not depend on $\sigma$.
\begin{corollary}\corollarylab{SmoothingGradientFlowDiscretefvalConvergence}
	In the discrete-time system \eref{SmoothingGradientFlowDiscrete} for Problem~\eref{Prob} with a stepsize $s_k = 1/\LL{k}$,
	if $\sum_{\kappa=0}^{k-1}s_\kappa$ diverges to $+\infty$,
	then $\lim_{k\to+\infty}\F{\bsxI{k}} = \F{\bsx^*}$.
\end{corollary}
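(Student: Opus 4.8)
The plan is to assemble the conclusion from three ingredients already in hand: the explicit upper bound of \theoremref{GradientFlowDiscrete}, the equivalence of \lemmaref{etaSum}, and the limit computation recorded in \eref{limit1}. Almost all of the analytic work has therefore been done, and what remains is a short organizing argument together with a two-way case split. First I would observe that, since $\bsx^*$ is an optimal solution of \eref{Prob}, the quantity $\F{\bsxI{k}} - \F{\bsx^*}$ is nonnegative; hence it suffices to show that the right-hand side of \eref{GradientFlowDiscrete} tends to $0$, after which a sandwiching argument delivers the claim.

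The first step is to translate the hypothesis into a statement about the denominator appearing in \eref{GradientFlowDiscrete}. The assumption that $\sum_{\kappa=0}^{k-1} s_\kappa$ diverges, combined with \lemmaref{etaSum} (negating both sides of its ``converges if and only if'' statement, and recalling that both sums are monotone increasing so that failure to converge means divergence to $+\infty$), yields that $\sum_{\kappa=0}^{k-1}\eta_{\kappa+1} s_\kappa$ also diverges to $+\infty$. I would also record that this sum is strictly increasing, since each summand $\eta_{\kappa+1}s_\kappa$ is strictly positive; this monotonicity will be needed below.

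Next I would split into two cases according to the behaviour of the smoothing-error sum $\sum_{\kappa=0}^{k-1}\eta_{\kappa+1}\mu_\kappa s_\kappa$. In the first case, if this sum converges, then the numerator $\tfrac{1}{2}\|\bsxI{0}-\bsx^*\|_2^2 + \beta\sum_{\kappa=0}^{k-1}\eta_{\kappa+1}\mu_\kappa s_\kappa$ remains bounded while the denominator diverges, so the ratio tends to $0$. In the second case, if the error sum diverges to $+\infty$, I would apply the Stolz--Ces\`{a}ro theorem to the ratio of numerator to denominator: the denominator is strictly increasing and divergent (established above), and the ratio of consecutive increments simplifies to a multiple of $\mu_k$, which vanishes because $\mu_k\searrow 0$. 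This is exactly the computation carried out in \eref{limit1}, so once again the upper bound tends to $0$.

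In either case $\F{\bsxI{k}} - \F{\bsx^*}$, being nonnegative and bounded above by a vanishing quantity, is squeezed to $0$, giving $\lim_{k\to+\infty}\F{\bsxI{k}} = \F{\bsx^*}$. The only point demanding any care—the \emph{hard part}, though it is genuinely mild—is verifying the hypotheses of the Stolz--Ces\`{a}ro theorem in the divergent case, namely the strict monotonicity and divergence of the denominator $\sum_{\kappa=0}^{k-1}\eta_{\kappa+1}s_\kappa$; both follow from positivity of the summands and from \lemmaref{etaSum}. I would finally note that the argument is uniform in $\sigma\geq 0$: when $\sigma=0$ one has $\eta_k\equiv 1$ and the two sums coincide, so no separate treatment of the strongly and non-strongly convex cases is required.
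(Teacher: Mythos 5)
Your proposal is correct and follows essentially the same route as the paper: the bound of \theoremref{GradientFlowDiscrete}, \lemmaref{etaSum} to transfer divergence of $\sum_{\kappa=0}^{k-1}s_\kappa$ to $\sum_{\kappa=0}^{k-1}\eta_{\kappa+1}s_\kappa$, and the case split on $\sum_{\kappa=0}^{k-1}\eta_{\kappa+1}\mu_\kappa s_\kappa$ with the Stolz--Ces\`{a}ro computation of \eref{limit1} in the divergent case. The paper presents this argument as the discussion immediately preceding the corollary rather than as a displayed proof, but the content is identical.
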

Furthermore, the following proposition shows the convergence to the optimal solution.
\begin{proposition}\propositionlab{GradientFlowDiscreteConvergence}
	In the discrete-time system \eref{SmoothingGradientFlowDiscrete} for Problem~\eref{Prob} with $\sigma > 0$ and a stepsize $s_k = 1/\LL{k}$,
	if $\sum_{\kappa=0}^{k-1}s_\kappa$ diverges to $+\infty$,
	then $\lim_{k\to+\infty} \bsxI{k} = \bsx^*$.
\end{proposition}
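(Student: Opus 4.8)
The plan is to obtain the convergence of the iterates as a direct consequence of the objective-value convergence already established in \corollaryref{SmoothingGradientFlowDiscretefvalConvergence}, in exact analogy with how the continuous-time solution-convergence statement was deduced from its value-convergence corollary. The hypotheses of the present proposition, namely the stepsize $s_k = 1/\LL{k}$ together with $\sum_{\kappa=0}^{k-1}s_\kappa \to +\infty$, are precisely those required by \corollaryref{SmoothingGradientFlowDiscretefvalConvergence}; hence I may take for granted that $\F{\bsxI{k}} - \F{\bsx^*} \to 0$ as $k\to+\infty$. All that remains is to convert this convergence of function values into convergence of $\bsxI{k}$ to $\bsx^*$, and this is exactly the point at which the extra hypothesis $\sigma > 0$ is used.

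The key step is a quadratic growth estimate of the objective around its minimizer. Since $f$ is $\sigma$-strongly convex and $h$ is convex, the sum $F = f + h$ is itself $\sigma$-strongly convex, the strong-convexity modulus being supplied entirely by $f$. Because $\bsx^*$ is a global minimizer of $F$, the standard characterization of a $\sigma$-strongly convex function at its minimizer yields
\begin{equation*}
	\F{\bsxI{k}} - \F{\bsx^*} \geq \frac{\sigma}{2}\|\bsxI{k} - \bsx^*\|_2^2
\end{equation*}
for every $k$. (Concretely, adding the $\sigma$-strong-convexity inequality for $f$ and the subgradient inequality for $h$ at $\bsx^*$ produces a linear term that cancels by the first-order optimality condition for the problem, leaving only the quadratic term.)

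Combining the two facts finishes the argument: the right-hand side of the displayed inequality tends to $0$ by \corollaryref{SmoothingGradientFlowDiscretefvalConvergence}, so, dividing by the positive constant $\sigma/2$, I conclude $\|\bsxI{k} - \bsx^*\|_2^2 \to 0$, that is, $\lim_{k\to+\infty}\bsxI{k} = \bsx^*$. I do not anticipate a genuine obstacle here: the substantive analytical work---the value convergence, which relied on \lemmaref{etaSum} and a Stolz--Ces\`{a}ro argument---is already packaged in the corollary, and what is left is the routine ``strongly convex objective gap controls distance to the minimizer'' estimate. The only point meriting care is that $h$ contributes no strong convexity, so the coefficient $\sigma/2$ must be traced back to $f$ and the linear term eliminated via optimality of $\bsx^*$ rather than being assumed to vanish.
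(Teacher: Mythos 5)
Your proof is correct, but it takes a genuinely different route from the paper's. The paper does not pass through \corollaryref{SmoothingGradientFlowDiscretefvalConvergence}: it returns to the Lyapunov inequality \eref{GradientFlowDiscreteProof6}, discards the (nonnegative) objective-gap term so as to isolate $\frac{\eta_k}{2}\|\bsxI{k}-\bsx^*\|_2^2$, divides by $\eta_k$, and lets $k\to+\infty$, using $\eta_k \geq \ee^{\sigma(s_{k-1}+\cdots+s_0)}\to+\infty$ to kill the first term and a second application of the Stolz--Ces\`{a}ro theorem to show $\frac{\beta}{\eta_k}\sum_{\kappa=0}^{k-1}\eta_{\kappa+1}\mu_\kappa s_\kappa \to \lim_{k\to+\infty}\beta\mu_k/\sigma = 0$. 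You instead keep the objective-gap term, invoke the corollary (whose hypotheses do match those of the proposition) to get $\F{\bsxI{k}}-\F{\bsx^*}\to0$, and convert the gap into a distance via the quadratic-growth inequality $\F{\bsx}-\F{\bsx^*}\geq(\sigma/2)\|\bsx-\bsx^*\|_2^2$; your justification of that inequality---adding the strong-convexity inequality for $f$ to the subgradient inequality for $h$ at $\bsx^*$ and cancelling the linear term by first-order optimality---is sound, and in fact more careful than the paper's own informal remark preceding its continuous-time convergence proposition, which states the growth bound for $f$ alone even though $\bsx^*$ need not minimize $f$. The two arguments deliver essentially the same quantitative content: by telescoping, $\sigma\sum_{\kappa=0}^{k-1}\eta_{\kappa+1}s_\kappa=\eta_k-1$, so the denominator $\eta_k$ in the paper's bound and the denominator $\sum_{\kappa=0}^{k-1}\eta_{\kappa+1}s_\kappa$ implicit in yours are comparable. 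What your route buys is modularity and exact parallelism with how the paper deduces the continuous-time iterate convergence from \corollaryref{GradientFlowRate}; what the paper's direct computation buys is an explicit non-asymptotic upper bound on $\|\bsxI{k}-\bsx^*\|_2^2$ rather than a bare limit statement.
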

\begin{proof}
	Eq.~\eref{GradientFlowDiscreteProof6} and $\F{\bsxI{k}} - \F{\bsx^*} \geq 0$ result in
	\begin{equation}
		\frac{\eta_k}{2}\|\bsxI{k}-\bsx^*\|_2^2
		\leq \frac{1}{2}\|\bsxI{0}-\bsx^*\|_2^2  + \beta\sum_{\kappa=0}^{k-1}\eta_{\kappa+1} \mu_\kappa s_\kappa,
	\end{equation}
	and $(1/2)\|\bsxI{k}-\bsx^*\|_2^2$ has the following upper bound:
	\begin{equation}
		\frac{1}{2}\|\bsxI{k}-\bsx^*\|_2^2 \leq
		\frac{1}{2\eta_k}\|\bsxI{0}-\bsx^*\|_2^2  +
		\frac{\beta}{\eta_k}
		\sum_{\kappa=0}^{k-1}\eta_{\kappa+1} \mu_\kappa s_\kappa.
		\elab{GradientFlowDiscreteConvergenceProof1}
	\end{equation}
	The limit $k\to+\infty$ of the right side is taken into consideration.
	Eq.~\eref{eta} indicates $\eta_k \geq \ee^{\sigma(s_{k-1}+\cdots+s_0)}$,
	and the first term converges to $0$.
	For the second term, the Stolz-Ces\`{a}ro theorem shows the following limit of $\mu_k\searrow0$:
	\begin{equation}
		\lim_{k\to+\infty}
		\frac{\beta}{\eta_k}
		\sum_{\kappa=0}^{k-1}\eta_{\kappa+1} \mu_\kappa s_\kappa
		= \lim_{k\to+\infty}\frac{\beta\eta_{k+1}\mu_{k}s_{k}}{\eta_{k+1} - \eta_{k}}
		= \lim_{k\to+\infty}\frac{\beta\eta_{k+1}\mu_{k}s_{k}}{\eta_{k+1}(1 - [1-\sigma s_{k}])}
		= \lim_{k\to+\infty}\frac{\beta\mu_{k}}{\sigma} = 0.
	\end{equation}
	Therefore, $\bsxI{k}$ converges to the optimal solution $\bsx^*$.
\end{proof}
The following corollary summarizes the convergence rate of the discrete-time system.
\begin{corollary}\corollarylab{GradientFlowDiscreteNonStrongly}
	In the discrete-time system \eref{SmoothingGradientFlowDiscrete} for Problem~\eref{Prob} with a stepsize $s_k = 1/\LL{k}$, and a smoothing parameter sequence
	$\mu_k = \mu_0 (k+1)^{\gamma}\,(0 < \gamma \leq 1, \, k \geq 0)$,
	the following inequality holds for $k \geq 1$:
	\begin{equation}
		\begin{split}
			 & \F{\bsxI{k}} - \F{\bsx^*} \\
			 & \leq
			\begin{cases}
				\dfrac{(1/2)\|\bsx^*-\bsxI{0}\|_2^2 + \alpha^{-1}\beta\mu_0^2(1-2\gamma)^{-1}(k^{1-2\gamma}-2\gamma)}
				{(L + \alpha\mu_0^{-1})^{-1}(1-\gamma)^{-1}\left[(k+1)^{1-\gamma}-1\right]}
				= \mathcal{O}(k^{-\gamma})
				 & \left(0 < \gamma < \dfrac{1}{2}, \dfrac{1}{2} < \gamma <1\right), \\
				\dfrac{(1/2)\|\bsx^*-\bsxI{0}\|_2^2 + \alpha^{-1}\beta\mu_0^2(1+\log k)}
				{2(L + \alpha\mu_0^{-1})^{-1}\left[(k+1)^{1/2}-1\right]}
				= \mathcal{O}(k^{-1/2}\log k)
				 & \left(\gamma = \dfrac{1}{2}\right),                               \\
				\dfrac{(1/2)\|\bsx^*-\bsxI{0}\|_2^2
				+ \alpha^{-1}\beta\mu_0^2(2-k^{-1})}
				{(L + \alpha\mu_0^{-1})^{-1}\log(k+1)}
				= \mathcal{O}(1/\log k)
				 & \left(\gamma = 1\right).
			\end{cases}
		\end{split}
	\end{equation}
\end{corollary}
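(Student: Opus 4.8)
The plan is to specialize the bound of \theoremref{GradientFlowDiscrete} to the non-strongly convex regime $\sigma = 0$ announced by the name of the statement. There $\eta_k = \prod_{l=0}^{k-1}(1-\sigma s_l)^{-1} = 1$ for every $k$, so the weighted sums in \eref{GradientFlowDiscrete} become plain sums and the bound collapses to
\begin{equation*}
	\F{\bsxI{k}} - \F{\bsx^*}
	\leq
	\left(\tfrac{1}{2}\|\bsxI{0}-\bsx^*\|_2^2 + \beta\sum_{\kappa=0}^{k-1}\mu_\kappa s_\kappa\right)
	\left(\sum_{\kappa=0}^{k-1} s_\kappa\right)^{-1}.
\end{equation*}
The whole argument then reduces to bounding the numerator sum from above and the denominator sum from below after substituting $s_\kappa = 1/\LL{\kappa} = (L + \alpha/\mu_\kappa)^{-1}$ and the smoothing sequence in its decreasing form $\mu_\kappa = \mu_0(\kappa+1)^{-\gamma}$ (as required by $\mu_k\searrow 0$).

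First I would record two elementary estimates on the stepsize. For the upper bound, $L \ge 0$ gives $L + \alpha/\mu_\kappa \ge \alpha/\mu_\kappa$, hence $s_\kappa \le \mu_\kappa/\alpha$ and therefore $\mu_\kappa s_\kappa \le \mu_\kappa^2/\alpha = \alpha^{-1}\mu_0^2(\kappa+1)^{-2\gamma}$. For the lower bound, since $(\kappa+1)^\gamma \ge 1$ for $\kappa \ge 0$, a short cross-multiplication yields $s_\kappa \ge s_0 (\kappa+1)^{-\gamma}$ with $s_0 = (L + \alpha\mu_0^{-1})^{-1}$; the one inequality that makes this work is $L \ge L(\kappa+1)^{-\gamma}$, which is again just $(\kappa+1)^\gamma \ge 1$. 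These two estimates convert both sums into pure power sums, $\sum_{j=1}^{k} j^{-2\gamma}$ for the numerator and $\sum_{j=1}^{k} j^{-\gamma}$ for the denominator, up to the constants $\alpha^{-1}\mu_0^2$ and $s_0$ already appearing in the displayed fractions.

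Next I would apply the monotone integral comparison to these power sums, which is exactly where the case split of the statement is forced. For the denominator, monotonicity of $x^{-\gamma}$ gives $\sum_{j=1}^{k} j^{-\gamma} \ge \int_1^{k+1} x^{-\gamma}\,\mathrm{d}x$, evaluating to $(1-\gamma)^{-1}[(k+1)^{1-\gamma}-1]$ for $0 < \gamma < 1$ and to $\log(k+1)$ for $\gamma = 1$. For the numerator, $\sum_{j=1}^{k} j^{-2\gamma} \le 1 + \int_1^{k} x^{-2\gamma}\,\mathrm{d}x$, which equals $(1-2\gamma)^{-1}(k^{1-2\gamma}-2\gamma)$ whenever $\gamma \ne 1/2$ (the sign of $1-2\gamma$ taking care of both $\gamma<1/2$ and $\gamma>1/2$ automatically) and equals $1+\log k$ when $\gamma = 1/2$. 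Substituting these four evaluations into the collapsed bound produces precisely the three closed-form fractions in the statement.

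Finally I would read off the asymptotic orders by isolating the dominant term of numerator and denominator in each regime: for $0 < \gamma < 1/2$ the numerator grows like $k^{1-2\gamma}$ against a denominator of order $k^{1-\gamma}$; for $1/2 < \gamma < 1$ and for $\gamma = 1$ the numerator power sum converges to a constant while the denominator grows like $k^{1-\gamma}$ (respectively like $\log k$); and at the threshold $\gamma = 1/2$ a logarithmic numerator competes with a $\sqrt{k}$-type denominator. The hard part will be this bookkeeping across the two thresholds $\gamma=1/2$ (where the numerator integral switches from a power to a logarithm) and $\gamma=1$ (where the denominator does the same): one must keep the endpoints of $\int x^{-2\gamma}\,\mathrm{d}x$ and $\int x^{-\gamma}\,\mathrm{d}x$ straight and correctly decide which term dominates. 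Once the two stepsize estimates above are in place, the remaining algebra is routine and the $\mathcal{O}$-orders follow by inspection.
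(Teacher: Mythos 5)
Your proposal is correct and follows essentially the same route as the paper: set $\sigma=0$ so that $\eta_k=1$, bound the stepsize via $(L+\alpha\mu_0^{-1})^{-1}(\kappa+1)^{-\gamma}\leq s_\kappa\leq \alpha^{-1}\mu_0(\kappa+1)^{-\gamma}$ (your cross-multiplication reducing the lower bound to $L\geq L(\kappa+1)^{-\gamma}$ is exactly the paper's first inequality), and then compare the resulting power sums with the same integrals $\int_1^{k+1}x^{-\gamma}\,\mathrm{d}x$ and $1+\int_1^{k}x^{-2\gamma}\,\mathrm{d}x$ to obtain the three closed forms and their orders. The only point worth noting is that you correctly read the smoothing sequence in its decreasing form $\mu_k=\mu_0(k+1)^{-\gamma}$, which is what the statement intends despite the sign typo in the exponent.
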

\begin{proof}
	On recalling $\sigma=0$ and $\eta_k=1$,
	the upper bound of $\F{\bsx(t)} - \F{\bsx^*}$
	in Eq.~\eref{GradientFlowDiscrete} of \theoremref{GradientFlowDiscrete}
	is evaluated.
	The stepsize $s_\kappa = 1/L_k = 1/(L+\alpha\mu_0(\kappa+1)^{-1})$
	satisfies the following inequality:
	\begin{equation}
		\frac{1}{(L + \alpha\mu_0^{-1})(\kappa+1)^\gamma}
		\leq
		\frac{1}{L + \alpha\mu_0^{-1}(\kappa+1)^\gamma}
		\leq
		\frac{1}{\alpha\mu_0^{-1}(\kappa+1)^\gamma}.
	\end{equation}
	The sum of the first inequality above is lower-bounded as follows:
	\begin{equation}
		\begin{split}
			 & \sum_{\kappa=0}^{k-1} s_\kappa = \sum_{\kappa=0}^{k-1} \frac{1}{L_\kappa}
			\geq \sum_{\kappa=0}^{k-1} \frac{1}{(L+\alpha\mu_0^{-1})(\kappa+1)^\gamma}
			\geq \int_{0}^{k} \frac{1}{(L + \alpha\mu_0^{-1})(\kappa+1)^\gamma}\,\mathrm{d}\kappa \\
			 & =
			\begin{cases}{}
				\left[
					\dfrac{(1-\gamma)^{-1}(\kappa+1)^{1-\gamma}}{L + \alpha\mu_0^{-1}}
					\right]_0^{k}
				= \dfrac{(1-\gamma)^{-1}\left[(k+1)^{1-\gamma}-1\right]}{L + \alpha\mu_0^{-1}} &
				(\gamma \neq 1),                                                                               \\
				\left[
					\dfrac{\log (\kappa+1)}{L + \alpha\mu_0^{-1}}
					\right]_0^{k}
				= \dfrac{\log (k+1)}{L + \alpha\mu_0^{-1}}                                     & (\gamma = 1).
			\end{cases}
		\end{split}
		\elab{GradientFlowDiscreteBoundNonStronglyProof1}
	\end{equation}
	Similarly, a term
	$\beta\eta_{k+1} \mu_k s_k = \beta\mu_{\kappa}/\LL{\kappa}
		= \beta \mu_0/(L (\kappa+1)^\gamma + \alpha\mu_0^{-1}(\kappa+1)^{2\gamma})$ in
	the numerator of Eq.~\eref{GradientFlowDiscrete} is bounded as follows:
	\begin{equation}
		\frac{\beta \mu_0}{(L + \alpha\mu_0^{-1})(\kappa+1)^{2\gamma}}
		\leq
		\frac{\beta \mu_0}{L (\kappa+1)^\gamma + \alpha\mu_0^{-1}(\kappa+1)^{2\gamma}}
		\leq
		\frac{\beta \mu_0^2}{\alpha (\kappa+1)^{2\gamma}}.
	\end{equation}
	The sum is upper-bounded as follows:
	\begin{equation}
		\begin{split}
			 & \beta\sum_{\kappa=0}^{k-1}\frac{\mu_\kappa}{\LL{\kappa}}
			\leq \beta \sum_{\kappa=0}^{k-1} \frac{\mu_0^2}{\alpha (\kappa+1)^{2\gamma}}
			\leq \frac{\beta\mu_0^2}{\alpha}
			+ \int_{0}^{k-1}
			\frac{\beta\mu_0^2}{\alpha (\kappa +1)^{2\gamma}}
			\,\mathrm{d}\kappa                                          \\
			 & =
			\begin{cases}{}
				\dfrac{\beta\mu_0^2}{\alpha}
				\left( 1 + \Big[ (1-2\gamma)^{-1} (\kappa+1)^{1-2\gamma} \Big]_0^{k-1}
				\right)
				= \alpha^{-1}\beta\mu_0^2(1-2\gamma)^{-1}(k^{1-2\gamma}-2\gamma) & (\gamma \neq 1/2), \\
				\dfrac{\beta\mu_0^2}{\alpha} \left( 1 + \Big[ \log (\kappa+1) \Big]_0^{k-1}
				\right)
				= \alpha^{-1}\beta\mu_0^2(1+\log k)                              & (\gamma = 1/2).
			\end{cases}
		\end{split}
		\elab{GradientFlowDiscreteBoundNonStronglyProof2}
	\end{equation}
	Consequently,
	the upper bound of $\beta\sum_{\kappa=0}^{k-1}\mu_\kappa/\LL{\kappa}$ above
	and lower bound of $\sum_{\kappa=0}^{k-1} 1/\LL{\kappa}$, given in Eq.~\eref{GradientFlowDiscreteBoundNonStronglyProof1}, are applied to
	the upper bound of $\F{\bsx(t)} - \F{\bsx^*}$ in Eq.~\eref{GradientFlowDiscrete} and result in the claim of the corollary.
\end{proof}

\section{Comparison of Continuous- and Discrete-Time Smoothing Gradient Flow}
\subsection{Relationship between Timeline and Stepsize}
As discussed in the previous sections,
the stepsize $s_k=1/L_k\,(k=0,1,\dots,)$ means that
the continuous timeline is discretized as $t_{k+1} = t_{k} + s_k$ and
the continuous-time system is numerically integrated using the forward Euler scheme,
which is equivalent to the smoothing gradient method.
The discretized time point $t_k$, which corresponds to the discrete $k$-th step of the smoothing gradient method,
satisfies a difference equation
\begin{equation}
	s_k = t_{k+1} - t_k = \frac{1}{L + \alpha/\mu(t_k)},
	\elab{sk}
\end{equation}
and its solution explicitly describes the relationship between
the continuous $t$ and the discrete $k$.
However, an explicit expression of the difference equation above
is difficult to obtain.
Instead, the remainder of this section derives upper and lower bounds of $t_k, k$, and $\mu(t_k)$.

In the standard gradient method (i.e., a constant stepsize $s_k = 1/L$ for an $L$-smooth objective function)
or a fixed smoothing parameter $s_k = 1/(L+\alpha/\mu)$, persuading an $\varepsilon$-optimal solution (Section 10.8 of \cite{Be17a}),
the discretized time point $t_k$ grows linearly with respect to the discrete $k$.
Therefore, the aforementioned relationship of $t_k$ and $k$ is a particular issue in discretizing
the smoothing gradient flow the Lipschitz smoothness parameter $L_k$ of which varies at each $k$-th step.

The following theorem shows the case of an exponentially decaying smoothing parameter.
\begin{theorem}\theoremlab{case1}
	In the stepsize rule \eref{sk},
	a smoothing parameter sequence $\mu(t_k) = \mu_k = \mu_0\lambda^{k}\,(\lambda\in(0,1),\,\mu_0 > 0)$
	satisfies the following inequality for $k \geq 1$:
	\begin{equation}
		\begin{cases}
			(L+\alpha\mu_0^{-1})^{-1}\dfrac{1-\lambda^{k}}{1-\lambda}
			\leq
			t_{k} - t_{0}
			\leq
			(\alpha\mu_0^{-1})^{-1}\dfrac{1-\lambda^{k}}{1-\lambda} < +\infty, \\
			\mu_0 - (\mu_0 L + \alpha)(1-\lambda)(t_k-t_0)
			\leq \mu(t_k)
			\leq  \mu_0 - \alpha(1-\lambda)(t_k-t_0).
		\end{cases}
	\end{equation}
\end{theorem}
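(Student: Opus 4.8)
The plan is to evaluate $t_k-t_0=\sum_{\kappa=0}^{k-1}s_\kappa$ directly and sandwich it between two geometric series. Substituting $\mu_\kappa=\mu_0\lambda^\kappa$ into the stepsize rule \eref{sk} gives
\[
 s_\kappa=\frac{1}{L+\alpha/\mu_\kappa}=\frac{1}{L+(\alpha/\mu_0)\lambda^{-\kappa}},
\]
so the first task is to obtain the per-term bounds $(L+\alpha\mu_0^{-1})^{-1}\lambda^\kappa\le s_\kappa\le(\alpha\mu_0^{-1})^{-1}\lambda^\kappa$. The upper bound follows at once by discarding the nonnegative term $L$ from the denominator. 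For the lower bound I would use $\lambda^{-\kappa}\ge 1$ (since $\lambda\in(0,1)$ and $\kappa\ge 0$) to write $L\le L\lambda^{-\kappa}$, hence $L+(\alpha/\mu_0)\lambda^{-\kappa}\le(L+\alpha/\mu_0)\lambda^{-\kappa}$, and then take reciprocals. This factoring-out of $\lambda^{-\kappa}$ from the denominator is the one slightly non-obvious step; everything else is routine.

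Summing the resulting bounds over $\kappa=0,\dots,k-1$ and using the finite geometric sum $\sum_{\kappa=0}^{k-1}\lambda^\kappa=(1-\lambda^k)/(1-\lambda)$ yields the first displayed line of the theorem, the finiteness of the upper bound following from $(1-\lambda^k)/(1-\lambda)<1/(1-\lambda)$ for all $k$.

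For the second line I would not start afresh but simply rearrange the first. Writing $S:=(1-\lambda^k)/(1-\lambda)=\sum_{\kappa=0}^{k-1}\lambda^\kappa$ and noting the identity $\mu_0-\mu(t_k)=\mu_0(1-\lambda^k)=\mu_0(1-\lambda)S$, the two bounds on $t_k-t_0$ become bounds on $S$: from the upper bound $t_k-t_0\le(\mu_0/\alpha)S$ I obtain $S\ge\alpha(t_k-t_0)/\mu_0$, whence $\mu_0-\mu(t_k)\ge\alpha(1-\lambda)(t_k-t_0)$, i.e., the claimed upper bound on $\mu(t_k)$; symmetrically, from $t_k-t_0\ge S/(L+\alpha/\mu_0)$ I obtain $S\le(L+\alpha/\mu_0)(t_k-t_0)$ and therefore $\mu_0-\mu(t_k)\le(\mu_0 L+\alpha)(1-\lambda)(t_k-t_0)$, the claimed lower bound. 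There is no substantial obstacle here; the only care needed is to keep the direction of each inequality straight when inverting, and to confirm that the lower per-term estimate on $s_\kappa$ is sharp enough to reproduce the stated constant $(L+\alpha\mu_0^{-1})^{-1}$ rather than a weaker one.
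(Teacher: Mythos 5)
Your proof is correct and follows essentially the same route as the paper's: the per-term sandwich $(L+\alpha\mu_0^{-1})^{-1}\lambda^{\kappa}\leq s_\kappa\leq(\alpha\mu_0^{-1})^{-1}\lambda^{\kappa}$, summation of the geometric series, and rearrangement via $\mu_0-\mu(t_k)=\mu_0(1-\lambda^k)$ to get the second line. You merely spell out the steps the paper leaves implicit (the factoring $L+\alpha\mu_0^{-1}\lambda^{-\kappa}\leq(L+\alpha\mu_0^{-1})\lambda^{-\kappa}$ and the explicit rearrangement), so there is nothing to add.
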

\begin{proof}
	Upper and lower bounds of the stepsize are given by
	\begin{equation}
		(L+\alpha\mu_0^{-1})^{-1}\lambda^{k}
		\leq
		t_{k+1} - t_{k} = \frac{1}{L + \alpha\mu_0^{-1}\lambda^{-k}}
		\leq
		(\alpha\mu_0^{-1})^{-1}\lambda^{k}.
	\end{equation}
	Summing up the inequality above for $k \geq 1$ results in
	\begin{equation}
		(L+\alpha\mu_0^{-1})^{-1}\frac{1-\lambda^{k}}{1-\lambda}
		\leq
		t_{k} - t_{0}
		\leq
		(\alpha\mu_0^{-1})^{-1}\frac{1-\lambda^{k}}{1-\lambda},
	\end{equation}
	and the first inequality of the theorem is obtained. On rearranging the inequality above,
	the second inequality of the theorem is also obtained.
\end{proof}
\begin{example}\examplelab{1}
	As a simple example, the minimization of the absolute value of a scalar variable
	(i.e., a problem $\min_{x\in\R}|x|$) is addressed.
	A smooth approximation is a $(1/\mu)$-smooth approximation of $|x|$
	with parameters $(\alpha,\beta) = (1,1)$ given by $f(x,\mu) = \sqrt{x^2+\mu^2}-\mu$.
	The partial derivative of $f(x,\mu)$ with respect to a smoothing parameter $\mu > 0$ is
	\begin{equation}
		\nabla_\mu f(x,\mu) = \frac{\mu}{\sqrt{x^2 + \mu^2}} -1
		\in[-1, 0),
	\end{equation}
	and $f(x,\mu)$ is a $(1/\mu)$-Lipschitz continuous smooth approximation with
	parameters $(1,1)$. The updating formula of the discrete-time system is
	\begin{equation*}
		x^{(k+1)} = x^{(k)} - s_k\frac{x^{(k)}}{\sqrt{(x^{(k)})^2+\mu_{k}^2}},
	\end{equation*}
	and the Lipschitz smoothness parameter of $|x|$ is $L=0$ and results in
	$s_k = 1/(L+\alpha\mu_k^{-1}) = \mu_k$. Therefore,
	\begin{equation*}
		0 \leq \left|\frac{x^{(k)}}{\sqrt{(x^{(k)})^2+\mu_0^2 \lambda^{2k}}}\right| \leq 1,
	\end{equation*}
	and an inequality $|x^{(k+1)} - x^{(k)}| \leq s_k$ is used to upper-bound $|x^{(0)} - x^{(+\infty)}|$ as follows:
	\begin{equation*}
		|x^{(0)} - x^{(+\infty)}|
		\leq \sum_{k=0}^{+\infty}s_k = \sum_{k=0}^{+\infty} \mu_0 \lambda^{k}
		= \frac{\mu_0}{1-\lambda}< +\infty.
	\end{equation*}
	That is, the domain that $x^{(k)}$ can reach is restricted;
	therefore, an initial point $x^{(0)}$ far from the optimal solution $x^* = 0$ (more precisely, a case of $|x^{(0)}| > \mu_0/(1-\lambda)$)
	cannot reach the solution. In addition, Theorem \ref{theorem:GradientFlowDiscrete} and \lemmaref{etaSum} cannot establish the convergence
	for an upper-bounded sum $\sum_{k=0}^{+\infty}s_k < +\infty$;
	the aforementioned result is consistent with the claim of \theoremref{GradientFlowDiscrete}.

	Conversely, the expression of the smoothing gradient flow (the continuous-time system) is derived from the smoothing gradient method (the discrete-time system).
	That is, the smoothing parameter function $\mu(t)$
	that generates the smoothing parameter sequence $\mu(t_k) = \mu_k = \mu_0\lambda^k$
	in the discrete-time system with the forward Euler scheme is explored.
	The application of \theoremref{case1} with $L=0$ and $\alpha=1$ results in $\mu(t_k) = \mu_0 - (1-\lambda)(t_k-t_0)$;
	therefore, the continuous-time smoothing parameter $\mu(t) = \mu_0-(1-\lambda)(t-t_0)$
	in the continuous-time system and
	resulting discrete-time system obtained using the forward Euler scheme is equivalent to the smoothing gradient method.
	In the continuous timeline, at a time period $t_{+\infty} = \mu_0/(1-\lambda)+t_0 = \lim_{k\to+\infty} t_k$ (i.e., the last time period that the discretized system can reach),
	$\mu(t_{+\infty}) = 0$ means
	that the ODE of the continuous-time system becomes ill-posed at $t=t_{+\infty}$.
\end{example}

The convergence of another form $\mu_k = \mu_0(k+1)^{-\gamma}$ is summarized in the following proposition.
\begin{proposition}\propositionlab{case2}
	In the stepsize rule \eref{sk},
	a smoothing parameter sequence $\mu(t_k) = \mu_k = \mu_0(k+1)^{-\gamma}$
	satisfies the following inequality for $k \geq 1$:
	\begin{enumerate}
		\item For $\gamma\neq1$,
		      \begin{equation}
			      \begin{cases}
				      (L+\alpha\mu_0^{-1})^{-1}(1-\gamma)^{-1}[(k+1)^{1-\gamma}-1]
				      \leq t_k - t_0
				      \leq (\alpha\mu_0^{-1})^{-1}(1-\gamma)^{-1}[k^{1-\gamma}-\gamma],       \\
				      [\alpha\mu_0^{-1}(1-\gamma)(t_k-t_0)+\gamma]^{(1-\gamma)^{-1}}
				      \leq k
				      \leq [(L+\alpha\mu_0^{-1})(1-\gamma)(t_k-t_0)+1]^{(1-\gamma)^{-1}} - 1, \\
				      \mu_0[(L+\alpha\mu_0^{-1})(1-\gamma)(t_k-t_0)+1]^{-\frac{\gamma}{1-\gamma}}
				      \leq
				      \mu(t_k)
				      \leq \mu_0\left([\alpha\mu_0^{-1}(1-\gamma)(t_k-t_0)+\gamma]^{(1-\gamma)^{-1}}+1\right)^{-\gamma}.
			      \end{cases}
		      \end{equation}
		\item For $\gamma = 1$,
		      \begin{equation}
			      \begin{cases}
				      (L+\alpha\mu_0^{-1})^{-1}\log(k+1)
				      \leq t_k - t_0
				      \leq (\alpha\mu_0^{-1})^{-1}(1 + \log k),      \\
				      \exp([\alpha\mu_0^{-1}][t_k -t_0]-1)
				      \leq k
				      \leq \exp([L+\alpha\mu_0^{-1}][t_k -t_0]) - 1, \\
				      \mu_0\exp(-[L+\alpha\mu_0^{-1}][t_k -t_0])
				      \leq \mu(t_k)
				      \leq
				      \mu_0\left[\exp([\alpha\mu_0^{-1}][t_k -t_0]-1)+1\right]^{-1}.
			      \end{cases}
		      \end{equation}
	\end{enumerate}
\end{proposition}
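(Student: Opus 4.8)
The plan is to mirror the three-step structure already used in \theoremref{case1}, now with the polynomially decaying sequence $\mu_k = \mu_0(k+1)^{-\gamma}$ in place of the geometric one. First I would bound the stepsize two-sidedly. Writing $s_k = (L + \alpha\mu_0^{-1}(k+1)^{\gamma})^{-1}$ and using $1 \leq (k+1)^{\gamma}$, the denominator obeys $\alpha\mu_0^{-1}(k+1)^{\gamma} \leq L + \alpha\mu_0^{-1}(k+1)^{\gamma} \leq (L+\alpha\mu_0^{-1})(k+1)^{\gamma}$, so that after taking reciprocals
\[
	(L+\alpha\mu_0^{-1})^{-1}(k+1)^{-\gamma} \leq s_k \leq (\alpha\mu_0^{-1})^{-1}(k+1)^{-\gamma}.
\]

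Next I would sum over $\kappa = 0,\dots,k-1$ to reach $t_k - t_0 = \sum_{\kappa=0}^{k-1}s_\kappa$, estimating the resulting $p$-series partial sum by integral comparison exactly as in the proof of \corollaryref{GradientFlowDiscreteNonStrongly}: from below $\sum_{\kappa=0}^{k-1}(\kappa+1)^{-\gamma} \geq \int_0^k (\kappa+1)^{-\gamma}\,\mathrm{d}\kappa$, and from above, after isolating the $\kappa = 0$ term, $\sum_{\kappa=0}^{k-1}(\kappa+1)^{-\gamma} \leq 1 + \int_0^{k-1}(\kappa+1)^{-\gamma}\,\mathrm{d}\kappa$. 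Evaluating the integrals produces the factor $(1-\gamma)^{-1}[(k+1)^{1-\gamma}-1]$ from below and $(1-\gamma)^{-1}[k^{1-\gamma}-\gamma]$ from above when $\gamma\neq1$, and the logarithmic factors $\log(k+1)$ and $1+\log k$ when $\gamma=1$. This establishes the first double inequality in each of the two cases.

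The remaining two inequalities follow by inverting the bounds on $t_k-t_0$. Solving the lower bound on $t_k-t_0$ for $k$ yields the upper bound on $k$, and solving the upper bound on $t_k-t_0$ yields the lower bound on $k$; for $\gamma\neq1$ this amounts to raising both sides to the power $(1-\gamma)^{-1}$ after clearing the multiplicative and additive constants, while for $\gamma=1$ it amounts to applying $\exp(\cdot)$. Finally, substituting these bounds on $k+1$ into the identity $\mu(t_k)=\mu_0(k+1)^{-\gamma}$ and using that $x\mapsto x^{-\gamma}$ is decreasing --- so a lower bound on $\mu(t_k)$ comes from an upper bound on $k+1$ and a upper bound on $\mu(t_k)$ from a lower bound on $k+1$ --- produces the third double inequality, completing both cases.

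The analytic content is routine and largely duplicates the earlier proofs; the only delicate point is the bookkeeping of inequality directions during the inversion step. One must verify that each rearrangement (multiplying through by $(L+\alpha\mu_0^{-1})(1-\gamma)$ or $\alpha\mu_0^{-1}(1-\gamma)$ and adjusting by the constants $1$ and $\gamma$) leaves a nonnegative base before the fractional power $(1-\gamma)^{-1}$ is applied, and that the sign of $1-\gamma$ is tracked consistently. The boundary value $\gamma=1$ genuinely differs, since the antiderivative degenerates to a logarithm and its inverse to an exponential, which is precisely why the proposition is split into the two stated cases.
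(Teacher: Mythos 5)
Your proposal follows exactly the paper's own route: the same two-sided bound on $s_k$ obtained by comparing the denominator $L+\alpha\mu_0^{-1}(k+1)^{\gamma}$ with $\alpha\mu_0^{-1}(k+1)^{\gamma}$ and $(L+\alpha\mu_0^{-1})(k+1)^{\gamma}$, the same integral comparison of the resulting $p$-series partial sum yielding $(1-\gamma)^{-1}[(k+1)^{1-\gamma}-1]$ and $(1-\gamma)^{-1}[k^{1-\gamma}-\gamma]$ (respectively the logarithms for $\gamma=1$), and the same inversion of the first double inequality to obtain the bounds on $k$ and on $\mu(t_k)$, which the paper dispatches with ``obtained by rearranging the first formula.'' Your additional care about the sign of $1-\gamma$ and the direction of the inequalities under the fractional power is a correct reading of the only delicate point (cf.\ \remarkref{5}, where the paper restates the formulas for $\gamma>1$), so the proposal is correct and essentially the same proof.
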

\begin{proof}
	Similar to the proof of \theoremref{case1}, upper and lower bounds
	\begin{equation}
		(L+\alpha\mu_0^{-1})^{-1}(k+1)^{-\gamma}
		\leq
		t_{k+1} - t_{k} = \frac{1}{L + \alpha\mu_0^{-1}(k+1)^\gamma}
		\leq
		(\alpha\mu_0^{-1})^{-1}(k+1)^{-\gamma}
	\end{equation}
	are used and result in bounds of the sum as follows:
	\begin{equation}
		(L+\alpha\mu_0^{-1})^{-1}\sum_{s=0}^{k-1} (s+1)^{-\gamma}
		\leq
		t_{k} - t_{0}
		\leq
		(\alpha\mu_0^{-1})^{-1}\sum_{s=0}^{k-1}(s+1)^{-\gamma}.
		\elab{case2Proof1}
	\end{equation}
	For $k \geq 1$,
	\begin{align}
		 & \int_0^{k} (u+1)^{-\gamma}\,\mathrm{d}u
		=
		\begin{cases}
			(1-\gamma)^{-1}[(k+1)^{1-\gamma}-1] & (\gamma\neq 1), \\
			\log(k+1)                           & (\gamma = 1)
		\end{cases} \\
		 & \leq
		\sum_{s=0}^{k-1} (s+1)^{-\gamma}
		\leq 1+\int_{0}^{k-1} (u+1)^{-\gamma}\,\mathrm{d}u
		= \begin{cases}
			  (1-\gamma)^{-1}[k^{1-\gamma}-\gamma] & (\gamma\neq 1), \\
			  1 + \log k                           & (\gamma = 1),
		  \end{cases}
		\elab{case2Proof2}
	\end{align}
	and Eqs.~\eref{case2Proof1} and \eref{case2Proof2} result in
	and the first formulas of both the items 1) and 2).
	The second and third formulae of each item is obtained by rearranging the first formula.
\end{proof}

\begin{remark}\remarklab{5}
	Besides the formulae for $\gamma \neq 1$ in the theorem hold for $\gamma < 1$,
	the following formulae provided $\gamma -1 >0$ is given here
	for simplifying subsequent discussion:
	\begin{equation}
		\begin{cases}
			(L+\alpha\mu_0^{-1})^{-1}(\gamma-1)^{-1}[1-(k+1)^{-(\gamma-1)}]
			\leq t_k - t_0
			\leq (\alpha\mu_0^{-1})^{-1}(\gamma-1)^{-1}[\gamma - k^{-(\gamma-1)}] < +\infty, \\
			[\gamma - \alpha\mu_0^{-1}(\gamma-1)(t_k-t_0)]^{-(\gamma-1)^{-1}}
			\leq k
			\leq [1-(L+\alpha\mu_0^{-1})(\gamma-1)(t_k-t_0)]^{-(\gamma-1)^{-1}} - 1,         \\
			\mu_0[1-(L+\alpha\mu_0^{-1})(\gamma-1)(t_k-t_0)]^{\frac{\gamma}{\gamma-1}}
			\leq
			\mu(t_k)
			\leq \mu_0\left([\gamma - \alpha\mu_0^{-1}(\gamma-1)(t_k-t_0)]^{-(\gamma-1)^{-1}}+1\right)^{-\gamma}.
		\end{cases}
		\elab{remark5}
	\end{equation}
\end{remark}

\begin{example}\examplelab{2}
	The same example of \exampleref{1} is taken into consideration.
	For $\mu_k = \mu_0 (k+1)^{-\gamma}\, (\mu_0 > 0,\,\gamma > 1)$
	with a large value of $x^{(0)}$,
	the state variable and objective function value cannot reach zero.
	Examples \ref{example:1} and \ref{example:2} do not satisfy
	the condition: $\sum_{\kappa=0}^{k-1}s_\kappa$ diverges as $k\to+\infty$ in \theoremref{GradientFlowDiscrete}.
	In the continuous-time system,
	the upper and lower bounds of $\mu(t_k)$ given by Eq.~\eref{remark5} in \remarkref{5}
	indicate that $\mu(t)$ reaches $\mu(t)=0$ at finite $t$.
\end{example}

\subsection{Comparison on Convergence Rate}
This section takes into consideration the case of $\sigma=0$ for simplicity
and compares the convergence rate of the objective function value
in the continuous- and discrete-time systems.

\corollaryref{GradientFlowRate} with $\sigma=0$ shows the convergence rate
of the objective function value in the continuous-time system $\mathcal{O}(1/t)$.
As discussed in \remarkref{attention}, the convergence rate in the discrete-time system
$\mathcal{O}(k^{-1/2}\log k)$ in \corollaryref{GradientFlowDiscreteNonStrongly}
cannot be (somehow mistakenly) compared with $\mathcal{O}(1/k)$, which is obtained by
replacing $t$ in $\mathcal{O}(1/t)$ with $k$.
Therefore, an ideal situation that the analytic solution of the continuous-time system is somehow available
is assumed.
The sample values of the analytic solution $\bsx(t_k)$ at the continuous timeline $t_k (k=0,1,\dots)$
and $\bsxI{k}$ of the discrete-time system with a smoothing parameter $\mu_k = \mu(k+1)^{-\gamma}\,(\gamma \in (0,1])$
are compared in terms of the convergence rate, as illustrated in \fref{sample}.
\propositionref{case2} shows that
\begin{equation}
	\left\{
	\begin{array}{l@{\,\,}l@{\quad}l}
		\mu(t) = \Theta\big((t-t_0)^{-\gamma/(1-\gamma)}\big),  & t_k - t_0 = \Theta\big((k+1)^{1-\gamma}\big) & (0 < \gamma < 1), \\
		\mu(t) = \mathcal{O}\big(\exp(-\alpha\mu_0^{-1}t)\big), & t_k - t_0 = \Theta\big(\log(k+1)\big)        & (\gamma = 1).
	\end{array}
	\right.
\end{equation}
On recalling $\gamma/(1-\gamma) = 1$ if $\gamma = 1/2$,
the following upper bound is obtained:
\begin{equation}
	\begin{split}
		 & \F{\bsx(t_k)} - \F{\bsx^*}
		\leq \left(\frac{1}{2}\|\bsx(t_0)-\bsx^*\|_2^2
		+ \beta\int_{t_0}^{t_k} \mu(\tau)\,\mathrm{d}\tau
		\right)
		\left(t_k- t_0\right)^{-1}    \\
		 & =
		\begin{cases}
			\Theta((t_k-t_0)^{-\gamma/(1-\gamma)}) = \Theta((k+1)^{-\gamma})      & (0 < \gamma < 1/2,\, 1/2 < \gamma < 1), \\
			\Theta((t_k-t_0)^{-1}\log(t_k - t_0)) = \Theta((k+1)^{-1/2}\log(k+1)) & (\gamma = 1/2),                         \\
			\Theta((t_k-t_0)) = \Theta(1/\log(k+1))                               & (\gamma = 1).
		\end{cases}
	\end{split}
\end{equation}
All the cases above are consistent with \corollaryref{GradientFlowDiscreteNonStrongly}.
Therefore, the discrete-time system (i.e., the smoothing gradient method and equivalent
discretized gradient flow using forward Euler scheme)
precisely simulates the continuous-time system
in terms of the convergence rate.
In other words,
much more effort persuading accurate numerical simulations, such as an implicit numerical methods
and small discretization stepsize, is not expected to improve
the convergence rate.

\subsection{Smoothing Parameter Design based on Continuous-time System}
Besides a stepsize in the discrete-time system has been conventionally designed as a function of the discrete step $k$,
a design exploiting the interpretation that
the smoothing gradient method is a discretized form of the smoothing gradient flow using the forward Euler scheme is
presented herein and provides a non-increasing function $\mu(t)>0$ of the continuous-time $t$.
The continuous-time system, depending on $\mu(t)$, is discretized and numerically integrated
using the stepsize $s_k = 1/(L + \alpha/\mu(t_k))$ in the forward Euler discretization.
The difference equation on $\bsxI{k}$ and $t_k$ is explicitly iterated by
\begin{equation}
	\begin{cases}
		t_{k+1} = t_k + (L + \alpha/\mu(t_k))^{-1},              \\
		\bsxI{k+1} = \bsxI{k} - s_k\FFxgrad{\bsxI{k}}{\mu(t_k)}, \\
		s_k = (L + \alpha/\mu(t_k))^{-1}.
		\elab{proposed}
	\end{cases}
\end{equation}
As illustrated in subsequent numerical examples, the proposed design practically shows preferred convergence behavior.
Furthermore, the following proposition and related corollary support the proposed design framework  theoretically.
\begin{proposition}
	For a given a non-increasing function $\mu(t)>0$ on $t\in[t_0,+\infty)$ satisfying
	$\lim_{t\to+\infty}\mu(t) = 0$,
	a smoothing parameter sequence in the smoothing gradient method \eref{GradientFlowDiscrete} is set as $\mu_k = \mu(t_k)$
	with the stepsize $s_k = 1/L_k = 1/(L + \alpha/\mu(t_k))$.
	Then, $t_k - t_0 = \sum_{\kappa=0}^{k-1} s_\kappa$ diverges to $+\infty$, and $\mu_k \searrow 0$.
\end{proposition}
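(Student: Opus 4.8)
The plan is to prove the divergence of $\sum_{\kappa=0}^{k-1}s_\kappa$ by contradiction, and then obtain the monotone convergence $\mu_k\searrow0$ as an immediate consequence.

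First I would record that the sequence $t_k$ is strictly increasing. Since $\mu(t_k)>0$ and $\alpha>0$, the stepsize $s_k = 1/(L+\alpha/\mu(t_k))$ is strictly positive for every $k$, so $t_{k+1}=t_k+s_k>t_k$. A strictly increasing real sequence converges to a limit $t_\infty\in(t_0,+\infty]$, and the divergence claim is exactly the statement that $t_\infty=+\infty$; equivalently, I must rule out $t_\infty<+\infty$.

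To rule this out I would assume for contradiction that $t_\infty<+\infty$. Because $\mu$ is non-increasing and $t_k<t_\infty$ for all $k$, one has $\mu(t_k)\geq\mu(t_\infty)$, and since $\mu$ is positive on the entire interval $[t_0,+\infty)$, the number $\mu(t_\infty)$ is a strictly positive constant. This uniform lower bound passes through the stepsize formula to give $s_k = 1/(L+\alpha/\mu(t_k)) \geq 1/(L+\alpha/\mu(t_\infty)) > 0$, a positive constant independent of $k$. Summing then forces $t_k-t_0 = \sum_{\kappa=0}^{k-1}s_\kappa\to+\infty$, contradicting the assumption $t_\infty<+\infty$. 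Hence $t_\infty=+\infty$, which is precisely the asserted divergence.

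For the second claim, once $t_k\to+\infty$ is established, I would combine it with the hypothesis $\lim_{t\to+\infty}\mu(t)=0$ to conclude $\mu_k=\mu(t_k)\to0$, while monotonicity of $\mu$ along the increasing sequence $t_k$ shows $\mu_k$ is non-increasing, so in fact $\mu_k\searrow0$. The argument is short and I do not anticipate a genuine obstacle; the one point deserving care is the conversion of boundedness of $t_k$ into a strictly positive lower bound for $\mu(t_k)$ via monotonicity. Here it is \emph{essential} that $\mu$ be positive everywhere (not merely in the limit), so that the bound $\mu(t_k)\geq\mu(t_\infty)>0$ is legitimate even when $\mu$ is only assumed non-increasing, hence possibly discontinuous, rather than continuous.
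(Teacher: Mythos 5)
Your proposal is correct and follows essentially the same route as the paper's proof: assume $t_k$ is bounded, extract a uniform positive lower bound on $\mu(t_k)$, hence a uniform positive lower bound on $s_k$, and derive a contradiction by summing. The only (minor, and arguably cleaner) difference is that you obtain the lower bound directly as $\mu(t_\infty)>0$ via monotonicity of $\mu$, whereas the paper writes it as $\min_{t_0\leq t\leq t_{+\infty}}\mu(t)$ over the closed interval.
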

\begin{proof}
	The boundedness of $t_k$, meaning $t_k \nearrow t_{+\infty} < +\infty$, is assumed,
	and the divergence of $t_k$ is shown by contradiction.
	The bounded $t_k$ results in $\underline{\mu} = \min_{t_0 \leq t \leq t_{+\infty}} \mu(t) > 0$,
	and the following inequality holds:
	\begin{equation}
		t_{k+1} - t_{k} = s_k = \frac{1}{L_k} = \frac{1}{L + \alpha\mu^{-1}_k}
		\geq  \frac{1}{L + \alpha\underline{\mu}^{-1}} > 0.
	\end{equation}
	On recalling $\frac{1}{L + \alpha\underline{\mu}^{-1}}$ is a constant
	and summing up the both side for $k$,
	$t_k$ diverges to $+\infty$; this is a contradiction.
	Therefore, $t_k$ diverges to $+\infty$, and
	$\mu_k = \mu(t_k)$ and $\mu(t) \searrow 0$ results in $\mu_k \searrow 0$.
\end{proof}
The application of the proposition to \corollaryref{SmoothingGradientFlowDiscretefvalConvergence}
and \propositionref{GradientFlowDiscreteConvergence},
the convergence is theoretically guaranteed as follows.
\begin{corollary}
	In the discrete-time system \eref{proposed} for Problem~\eref{Prob},
	$\lim_{k\to+\infty}f(\bsxI{k}) +h(\bsxI{k}) = f(\bsx^*) + h(\bsx^*)$, and
	$\lim_{k\to+\infty} \bsxI{k} = \bsx^*$ if $\sigma > 0$.
\end{corollary}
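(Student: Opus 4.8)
The plan is simply to chain together the immediately preceding proposition with \corollaryref{SmoothingGradientFlowDiscretefvalConvergence} and \propositionref{GradientFlowDiscreteConvergence}, observing that the discrete-time system \eref{proposed} is nothing but the smoothing gradient method \eref{SmoothingGradientFlowDiscrete} specialized to the smoothing sequence $\mu_k = \mu(t_k)$ and the stepsize $s_k = 1/L_k = 1/(L + \alpha/\mu(t_k))$ generated by the coupled recursion on $t_k$.

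First I would apply the preceding proposition to the data of \eref{proposed}. Its hypotheses are exactly the standing assumptions on $\mu(\cdot)$, namely that $\mu(t)>0$ is non-increasing on $[t_0,+\infty)$ with $\lim_{t\to+\infty}\mu(t)=0$; its conclusions then give both that $\sum_{\kappa=0}^{k-1}s_\kappa = t_k - t_0$ diverges to $+\infty$ and that $\mu_k = \mu(t_k)\searrow 0$. With the divergence $\sum_{\kappa=0}^{k-1}s_\kappa \to +\infty$ secured, I would invoke \corollaryref{SmoothingGradientFlowDiscretefvalConvergence} to obtain $\lim_{k\to+\infty}\F{\bsxI{k}} = \F{\bsx^*}$, which upon writing out $F = f + h$ is precisely the first asserted limit $\lim_{k\to+\infty} f(\bsxI{k}) + h(\bsxI{k}) = f(\bsx^*) + h(\bsx^*)$. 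For the second assertion, under the additional hypothesis $\sigma > 0$ the same divergence condition lets me apply \propositionref{GradientFlowDiscreteConvergence}, yielding $\lim_{k\to+\infty}\bsxI{k} = \bsx^*$.

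Since all the quantitative work has already been carried out in the earlier results, the corollary demands no new estimation; the only thing to check---and the mild obstacle here---is the bookkeeping that \eref{proposed} really meets the hypotheses of those results. Concretely, one must confirm that the sequence $\mu_k$ produced by the self-referential recursion (in which $t_{k+1}$ depends on $\mu(t_k)$ while $\mu_k$ in turn depends on $t_k$) inherits monotonicity and the null limit from $\mu(\cdot)$, which follows at once because $t_k$ is strictly increasing and $\mu(\cdot)$ is non-increasing with limit zero, and that the stepsize coincides with the $s_k = 1/L_k$ demanded by \corollaryref{SmoothingGradientFlowDiscretefvalConvergence} and \propositionref{GradientFlowDiscreteConvergence}. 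Once these identifications are in place, the claim is immediate.
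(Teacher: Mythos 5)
Your proposal is correct and follows exactly the paper's own route: the preceding proposition supplies the divergence of $\sum_{\kappa=0}^{k-1}s_\kappa = t_k - t_0$ and $\mu_k\searrow 0$, after which Corollary~\ref{corollary:SmoothingGradientFlowDiscretefvalConvergence} gives the function-value limit and Proposition~\ref{proposition:GradientFlowDiscreteConvergence} gives the iterate limit for $\sigma>0$. The bookkeeping you flag (that \eref{proposed} is the smoothing gradient method with $\mu_k=\mu(t_k)$ and $s_k=1/L_k$) is the same identification the paper relies on.
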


\section{Numerical Examples}
Based on numerical examples in \cite{Qu22}, a problem
\begin{equation}
	\min_{\bsx\in\R^{n_\bsx}}  \|\bsA\bsx-\bsb\|_2^2 + \|\bsC\bsx-\bsd\|_1
\end{equation}
is taken into consideration here.
The matrices $\bsA\in\R^{n_\bsA \times n_\bsx}$, $\bsC\in\R^{n_\bsC \times n_\bsx}$,
and an optimal solution $\bsx^*\in\R^{n_\bsx}$ are sampled using \texttt{numpy.random.randn},
and the other data are set as $\bsb = \bsA\bsx^*\in\R^{n_\bsA}$, $\bsd = \bsC\bsx^*\in\R^{n_\bsC}$;
therefore, the optimal value is $0$.
Subsequent numerical examples comprise both
non-strongly convex and convex objective functions,
and design parameters are set as $t_0 = 1$ and $\mu_0 = \mu(t_0) = 1$ throughout the numerical examples.

In the $\ell_1$-norm $\|\bsC\bsx-\bsd\|_1$,
an $i$-th row vector of $\bsC_i$ is denoted by $\bsc_i^\top\,(i=1,\dots,n_\bsC)$,
and an absolute value $|\bsc_i^\top\bsx+d_i|$ is approximated by
$\sqrt{\big(\bsc_i^\top\bsx+d_i\big)^2 + \mu^2} - \mu$.
\theoremref{LipschitzSum} shows that $\|\bsC\bsx-\bsd\|_1$
is a $(1/\mu)$-smooth approximation of $\|\bsC\bsx-\bsd\|_1$
with parameters $(\sum_{i=1}^{n_\bsC}\|\bsc_i\|_2^2,n_\bsC)$.

\begin{remark}
	The smoothing parameter of the exponentially decaying form $\mu_k = \mu_0\lambda^k$ numerically behaves like
	a fixed smoothing parameter, persuading an $\varepsilon$-optimal solution,
	and the convergence property depends strongly on the choice of the decaying parameter $\lambda$;
	therefore, the result of the exponentially decaying form is omitted here
	(for further details, see Section 10.8 of \cite{Be17a}).
\end{remark}

\subsection{Strongly Convex Case}
Based on \cite{Qu22}, the parameter of the problem size is set as $n_\bsx = 10, n_\bsA = 20$, and $n_\bsC = 50$.
\begin{figure}[H]
	\begin{minipage}[b]{0.48\columnwidth}
		\centering
		\includegraphics[clip,width=1\textwidth]{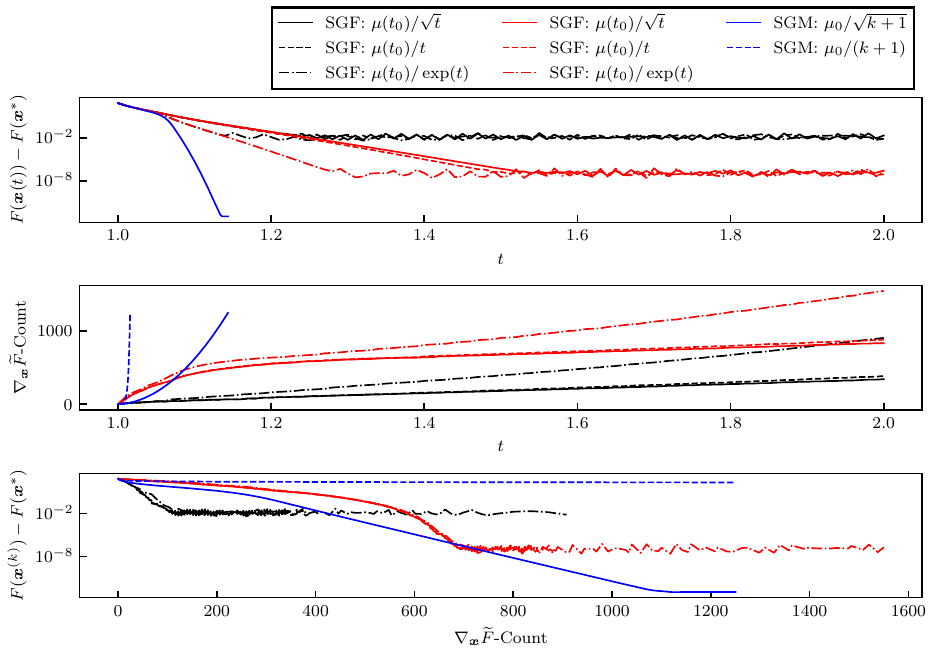}
		\caption{Continuous- and discrete-time state variables
		(the smoothing gradient flow on $t\in[1, 2]$; SGF and
		the smoothing gradient method on $k\in\{0,\dots,1250\}$; SGM, respectively.
		The upper and middle plots are in the continuous-time domain $t$,
		and the bottom is plotted with respect to the count of the $\nabla_\bsx\widetilde{F}$ evaluation.
		SGF is integrated using \texttt{RK45}
		with accuracy parameters
		$(\mathtt{rtol},\mathtt{atol})=(10^{-3},10^{-6})$
		and $(10^{-8},10^{-11})$ for the black and red lines, respectively.). }
		\flab{ODE}
	\end{minipage}
	\begin{minipage}[b]{0.48\columnwidth}
		\centering
		\includegraphics[clip,width=1\textwidth]{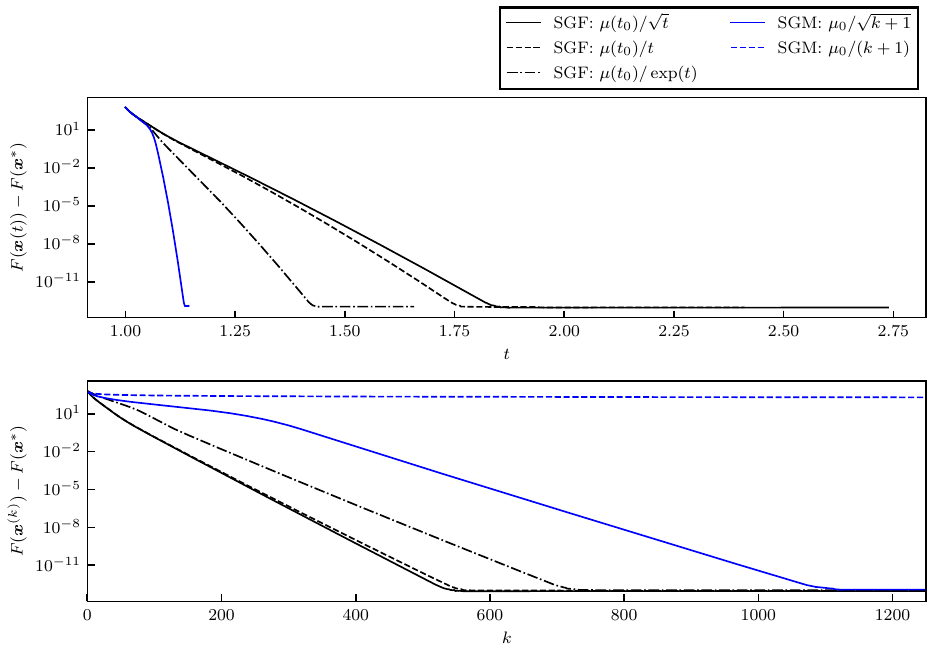}
		\caption{Continuous- and discrete-time state variables
			on $k\in\{0,\dots,1250\}$ (the smoothing gradient flow; SGF and
			the smoothing gradient method; SGM, respectively.
			The upper plot is in the continuous-time domain $t$,
			and the bottom is plotted with respect to the count of the $\nabla_\bsx\widetilde{F}$ evaluation.
			SGF is integrated using the proposed forward Euler scheme).
			\newline
			\newline}
		\flab{Euler}
	\end{minipage}
\end{figure}
First, the numerical integration using an ODE solver \texttt{RK45} of a package \verb+scipy.integrate.solve_ivp+
(hereafter, this scheme is called ODE solver) is performed,
and its practical computation efficiency is examined.
The result of the ODE solver is depicted in \fref{ODE}.

In the computation with the ODE solver,
the terminal time $T$ for the numerical integration is gradually increased,
and the resulting value of $\bsx(T)$ and the required number of the $\nabla_\bsx\widetilde{F}$ evaluation are observed.
The $\nabla_\bsx\widetilde{F}$ evaluation is counted using a global variable without modifying aforementioned \texttt{RK45}.
As shown in \corollaryref{GradientFlowDiscreteNonStrongly},
the smoothing gradient method, labeled as ``SGM'', with $\mu_k = \mu_0/\sqrt{k+1}$
performs better convergence behavior.
In the numerical integration with ODE solver, labeled as ``SGF'' with red lines,
the number of the $\nabla_\bsx\widetilde{F}$ evaluation is suppressed compared with SGM;
however, the improvement of the objective function value is terminated
at the early stage.
In a practical situation solving an optimization problem,
the aforementioned parameter tuning requires much effort
and the $\nabla_\bsx\widetilde{F}$ evaluation for the tuning procedure itself;
therefore, further discussion on the usage of the ODE solver is out of this paper's scope and omitted.

To explore the smoothing parameter design in the discrete-time domain (and possible related application to optimization problems),
the comparison on the smoothing parameter design is illustrated in \fref{Euler}.
Besides \corollaryref{GradientFlowDiscreteNonStrongly} indicates that
the form of $\mu_k = \mu_0/\sqrt{k+1}$ achieves better convergence,
the proposed forward Euler scheme for SGF numerically shows preferred convergence property.

\subsection{Non-Strongly Convex Case}
A numerical experiment with a setting of $n_\bsx = 10, n_\bsA = 2, n_\bsC = 5$ is illustrated here.
The objective function is convex but
the coefficient of the quadratic cost of $\bsx$ (i.e., $\bsA^\top\bsA$)
has its rank $n_\bsA = 2$ or less because $n_\bsA = 2 < n_\bsx = 10$, meaning
it has 8 or more zero eigenvalues; therefore the objective function is non-strongly convex.

The comparison with ODE solver is illustrated in \fref{ODE2}.
The result with the proposed forward Euler scheme for SGF numerically
is also depicted in \fref{Euler2}. Similar to the aforementioned strongly convex case,
the practical advantage of the proposed forward Euler scheme is observed.
\begin{figure}[H]
	\begin{minipage}[b]{0.48\columnwidth}
		\centering
		\includegraphics[clip,width=1\textwidth]{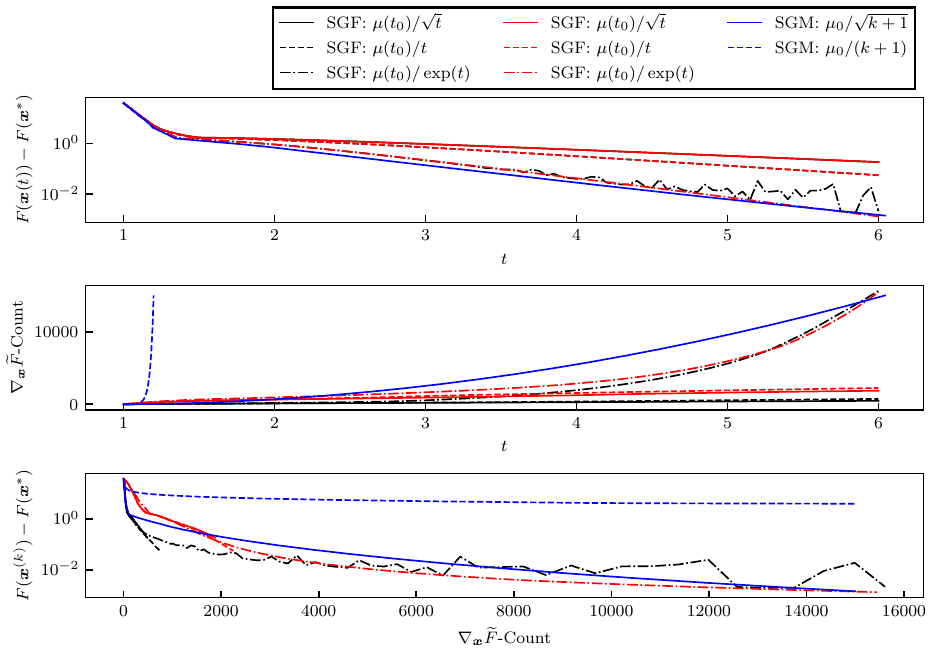}
		\caption{Continuous- and discrete-time state variables
		(the smoothing gradient flow on $t\in[1, 6]$; SGF
		and the smoothing gradient method on $k\in\{0,\dots,15000\}$; SGM, respectively.
		The upper plot is in the continuous-time domain $t$,
		and the bottom is plotted with respect to the count of the $\nabla_\bsx\widetilde{F}$ evaluation.
		SGF is integrated  using \texttt{RK45}
		with accuracy parameters
		$(\mathtt{rtol},\mathtt{atol})=(10^{-3},10^{-6})$
		and $(10^{-8},10^{-11})$ for the black and red lines, respectively.). }
		\flab{ODE2}
	\end{minipage}
	\begin{minipage}[b]{0.48\columnwidth}
		\centering
		\includegraphics[clip,width=1\textwidth]{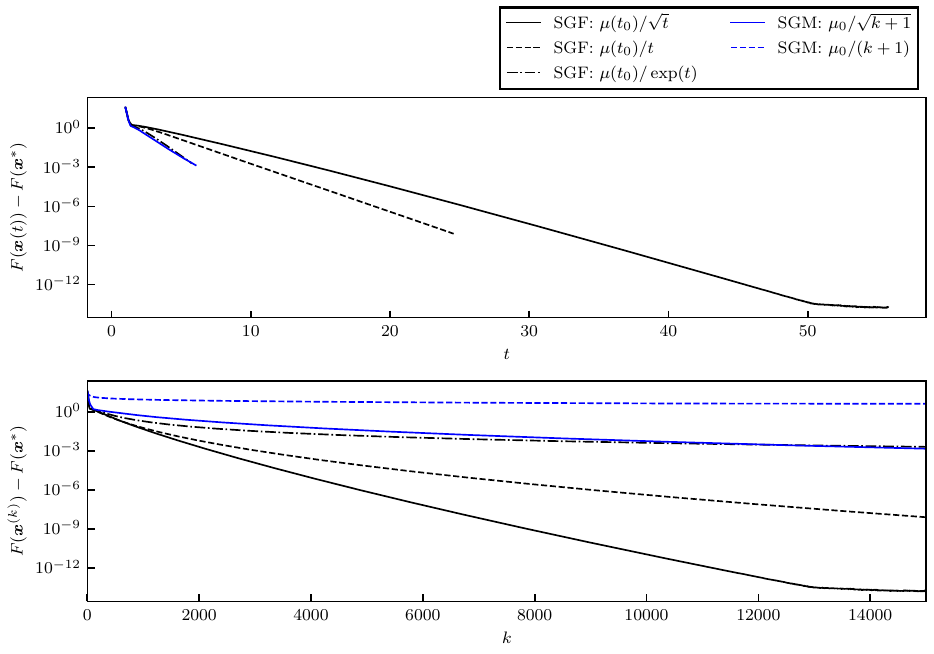}
		\caption{Continuous- and discrete-time state variables
			on $k\in\{0,\dots,15000\}$ (the smoothing gradient method; SGM and
			the smoothing gradient flow; SGF, respectively. The upper plot is in the continuous-time domain $t$,
			and the bottom is plotted with respect to the count of the $\nabla_\bsx\widetilde{F}$ evaluation.
			SGF is integrated using the proposed forward Euler scheme.).
			\newline
			\newline}
		\flab{Euler2}
	\end{minipage}
\end{figure}

\section{Concluding Remarks}
This paper presented the convergence analysis of
the smoothing gradient flow (a continuous-time ODE) and the smoothing gradient method (a discrete-time difference equation)
using the Lyapunov functions.
This paper focused on the discrete-time system obtained using the forward Euler scheme,
which is equivalent to the smoothing gradient method,
because the convergence property of the continuous-time system does not hold in
the approximate numerical integration and requires additional analysis in the discrete-time domain.
The new smoothing parameter design is proposed based on the forward-Euler-based numerical integration of the smoothing gradient flow
and numerically showed its advantage compared with the existing smoothing parameter sequences.

\section{Acknowledgment}
This work was supported by JSPS KAKENHI Grant Number JP22K14279.

\end{document}